\newtheorem{theorem}{Theorem}[section]
\newtheorem{proposition}[theorem]{Proposition}
\newtheorem{lemma}[theorem]{Lemma} 
\newtheorem{remark}[theorem]{Remark}
\newcommand{\AAA}{{\mathsf{A}}}
\newcommand{\AAK}{\mathsf{A}_k}
\newcommand{\WW}{{\mathsf{W}}}
\newcommand{\UU}{{\mathsf{U}}}
\newcommand{\VV}{{\mathsf{V}}}
\newcommand{\RWk}{{\mathcal{R}_{W_k}}}
\newcommand{\W}{W}
\newcommand{\Wk}{W_k}
\newcommand{\wki}{w_{k,i}}
\newcommand{\wkj}{w_{k,j}}
\newcommand{\wkl}{w_{k,l}}
\newcommand{\Pk}{P_k}
\newcommand{\K}{K}
\newcommand{\Kk}{K_k}
\newcommand{\Kdk}{K_k^\dagger}
\newcommand{\Rn}{{\mathbb{R}^n}}
\newcommand{\RM}{{\mathbb{R}^M}}
\newcommand{\xx}{{\mathbf{x}}}
\newcommand{\yy}{{\mathbf{y}}}
\newcommand{\vv}{{\mathbf{v}}}
\newcommand{\bb}{{\mathbf{b}}}
\newcommand{\uu}{{\mathbf{u}}}
\newcommand{\ee}{{\mathbf{e}}}
\newcommand{\Omegai}{{\Omega_\mathrm{i}}}
\newcommand{\Omegao}{{\Omega_\mathrm{o}}}
\newcommand{\nullspace}[1]{{\mathcal{N}(#1)}}
\newcommand{\rem}[1]{\textcolor{black}{#1}}
\DeclareMathOperator*{\argmin}{arg\,min}
\DeclareMathOperator*{\argmax}{arg\,max}
\DeclareMathOperator*{\Ran}{Ran}
\title{Fictitious null spaces for improving the solution of injective  inverse problems} 
\author{Ole L{\o}seth Elvetun\thanks{Faculty of Science and Technology, Norwegian University of Life Sciences. Email: ole.elvetun@nmbu.no.}, Kim Knudsen\thanks{Department of Applied Mathematics and Computer Science, Technical University of Denmark} and Bj{\o}rn Fredrik Nielsen\thanks{Faculty of Science and Technology, Norwegian University of Life Sciences.}}
\begin{document}

\maketitle

\begin{abstract}
For linear ill-posed  problems with nontrivial null spaces, Tikhonov regularization and truncated singular value decomposition (TSVD) typically yield solutions that are close to the minimum norm solution. Such a bias is not always desirable, and we have therefore in a series of papers developed a weighting procedure which produces solutions with a different and controlled bias. This methodology can also conveniently be invoked when sparsity regularization is employed. 

The purpose of the present work is to study the potential use of this weighting applied to injective operators. The image under a compact operator of the singular vectors/functions associated with very small singular values will be almost zero. Consequently, one may regard these singular vectors/functions to constitute a basis for a fictitious null space that allows us to mimic the previous weighting procedure.  It turns out that this regularization by weighting can improve the solution of injective inverse problems compared with more traditional approaches.


 We present some analysis of this methodology and exemplify it numerically, using sparsity regularization, for three PDE-driven inverse problems: the inverse heat conduction problem, the Cauchy problem for Laplace's equation, and the (linearized) Electrical Impedance Tomography problem with experimental data. 
\end{abstract}

\section{Introduction} \label{sec:introduction}

Consider the equation 
\begin{equation}\label{eq:orig}
    \K x = y,
\end{equation}
where $K: X \rightarrow Y$ is a linear, compact and injective operator between two separable Hilbert spaces $X$ and $Y$ of infinite dimension. If we have noise-free data $y \in \Ran(K) \subset Y,$  then the generating element $x^\dagger \in X$, solving $K x^\dagger = y$, can theoretically be recovered uniquely. 

In applications only some noisy approximation $y^\delta$ of the exact data $y$ is available with an upper bound for 
$\|y - y^\delta\|_Y$, i.e.,   
\begin{equation}\label{eq:noisebound}
\|y - y^\delta\|_Y \leq \delta.
\end{equation}
We can therefore at the best obtain a good approximation of $x^\dagger$, and one often considers the associated variational formulation 
\begin{equation}\label{eq:varform}
    \min_x \left\{\frac{1}{2}\|Kx - y^\delta\|_Y^2 + \alpha \mathcal{R}(x)\right\},
\end{equation}
where $\mathcal{R}: X \rightarrow \mathbb{R}$ represents a regularization functional and $\alpha>0$ is the regularization parameter, which is selected based on the upper bound $\delta$ for the size of the noise. The functional $\mathcal{R}$ is typically independent of $\delta$ and chosen based on which characteristics one desires from the solution, e.g., smoothness, sparsity or jump discontinuities.
One can think of such specific techniques as causing a well-understood and warranted \textit{bias} in the problem. However, for inverse problems where $K$ is non-injective, the regularization operators causing these characteristics are also known to produce other forms of biases, such as the well-known \textit{depth bias} in the inverse EEG problem, see, e.g., \cite{fuchs99}. Such unwanted bias might also be a problem for injective inverse problems due to the presence of very small singular values.



The purpose of this paper is to explore the potential use of a weighting procedure which, in a sense specified later, produce more unbiased solutions (relative to a given basis). This weighting was originally developed for non-injective inverse problems, see \cite{elvetun2021a,elvetun21b,elvetun2023weighted, elvetun2024identifying}.
The weighting methodology for non-injective problems involves the projection of each basis function onto the orthogonal complement of the null space of the forward operator. Hence, if we were to extend this approach in a straightforward manner to a problem with a trivial null space, i.e., an injective operator, this projection would simply become the identity map. As a consequence, we would obtain an unweighted regularization method. Consequently, we need a more sophisticated approach in the injective case to obtain the unbiased solutions we want.

Throughout, we assume that $K$  has a trivial null space. For a severely ill-posed problems with noisy data, only very few singular components can be trusted. This defines a large subspace of $X$ spanned by singular functions corresponding to the small singular values, a subspace that is practically invisible from data. For this subspace we coin the phrase \textit{fictitious null spaces}. 

The main novelty of this work is the definition, analysis and demonstration of the weighted regularization method for injective problems with noisy data. We will, based on the fictitious null space and  proper thresholding for small singular values, define the weighting operator as for the non-injective case. This provides a  regularization method as in \eqref{eq:varform} with the weighting operator acting in the regularization term. 
Our analysis shows that, in contrast to the non-weighted approach, we are 
able to recover solutions made up from one basis function with only a marginal error in its magnitude. We consider this to be a sanity check for the method. In addition, we analyse the method in terms of convergence, as the noise tends to zero, in the spirit of regularization theory. We close the paper by demonstrating that for three complex inverse problems, the inverse heat conduction problem, the Cauchy problem for Laplace's equation, and a linearized Electrical Impedance Tomography (EIT) problem, the weighted regularization improves the reconstructions vastly.

\rem{The weighted regularization method applies to both sparsity and classical Tikhonov regularization. In order to keep the exposition clear, we focus in the theoretical development and the computational examples mostly on sparsity regularization. This is a modern approach to the regularization of inverse problems that has gained enormous interest since the pioneering work of Daubechies, Defrise and De Mol in 2004 \cite{Daubechies2004}. We would also like to mention, among others, important contributions from \cite{bredies08,lorenz08,grasmair10,Burger_2013}. See also the special issue of Inverse Problems \cite{jin2017a} for many interesting directions both theoretically, computationally, and in relation to applications. The systematic weighting strategy for sparsity regularization suggested here draws upon \cite{elvetun2023weighted,elvetun2024identifying}; a more heuristic approach to weighted sparsity regularization  was previously used in the context of partial data EIT \cite{garde2015a, garde2016a}.}

The observation that operators with very small singular values have a numerical, or may we say practical, null space which is significantly larger than their exact kernel is not new. For example, {\em numerical rank/effective rank/pseudorank} is discussed in detail in \cite[Section 3.1]{hansen1998rank}. Nevertheless, our approach for handling this issue is, as far as the authors know, new.

The outline of the paper is as follows: We first introduce the necessary notation in Section \ref{sec:stage}. Section \ref{sec:analysis} discusses the bias caused by standard approaches and contains the main theoretical results: The ability for the weighted regularization to recover, with a minor error of its magnitude, a source made from a single basis function. Moreover, a convergence analysis is established. In Section \ref{sec:numerical} we study three specific problems and exemplify numerically how the approach improves the solutions.

Note that we below, for the sake of simple notation, use the notation $\| \cdot \|$ instead of $\| \cdot \|_X$ or $\| \cdot \|_Y$. The context reveals which of these two Hilbert norms that are in use. 

\section{Setting the stage}\label{sec:stage}
Since $K$ is a compact operator, we can represent $\K x$ in terms of its singular value decomposition (SVD)
\begin{equation}\label{eq:SVD}
    \K x = \sum_{i=1}^\infty \sigma_i(x,v_i)u_i,
\end{equation}
where $(v_j, u_j; \sigma_j)$ is a singular system for $\K$ and $\sigma_1 \geq \sigma_2 \geq \ldots > 0$. 
This also enables the definition of the truncated forward operator $\Kk: X \rightarrow Y$,
\begin{equation} \label{eq:tfwd}
    \Kk x = \sum_{i=1}^k \sigma_i(x,v_i)u_i.
\end{equation}
It is well-known that the regularized solution of \eqref{eq:orig}, using truncated SVD (TSVD), reads
\begin{equation}\label{eq:TSVD}
    \Kdk y^\delta = \sum_{i=1}^k \sigma_i^{-1}(y^\delta,u_i)v_i.
\end{equation}
Let us denote $\eta := y^\delta - y$, i.e., $\eta$ contains the noise. Inserting this into \eqref{eq:TSVD} we get
\begin{equation*}
    \Kdk y^\delta = \sum_{i=1}^k \sigma_i^{-1}(y,u_i)v_i + \sum_{i=1}^k \sigma_i^{-1}(\eta,u_i)v_i,
\end{equation*}
\rem{and we want to choose the truncation parameter $k$ such that 
\begin{equation} \label{eq:select_k}
    \frac{(y,u_k)}{\sigma_k} > \frac{(\eta,u_k)}{\sigma_k} \quad \textnormal{and} \quad \frac{(y,u_{k+1})}{\sigma_{k+1}} \le \frac{(\eta,u_{k+1})}{\sigma_{k+1}}.
\end{equation}
That is, $k$ marks the distinction between which components that mainly carry information about the data and which mainly contain noise. Ideally, the first and second inequalities in \eqref{eq:select_k} hold with significant margins.}


Note that the operator $\Kk$ is not injective and that 
\begin{equation*}
    \Kdk y^\delta \in \textnormal{span} \{v_1,v_2,\ldots,v_k \} = \nullspace{\Kk}^\perp,      
\end{equation*}
where $\nullspace{\Kk}$ denotes the null space of $\Kk$.  That is, $\Kdk y^\delta$ yields the minimum norm solution of 
\begin{equation*}
    \min_x \|\Kk x - y^\delta\|^2 .
\end{equation*}
For well-chosen $k$, depending on the noise,  $\nullspace{\Kk}$ is referred to as the fictitious null space of $K.$

Furthermore, when Tikhonov regularization $\mathcal{R}(x)= \| x \|^2$ is employed, the solution of \eqref{eq:varform} is approximately equal to $\Kdk y^\delta$ for appropriate values of $\alpha$. We conclude that we again get solutions that are (almost) minimum norm biased, which might not be desirable for the application at hand. Standard sparsity regularization also introduces bias, but the analysis is far more involved, see \cite[pages 159--160]{burger2013inverse} and \cite[Appendix A]{elvetun2023weighted} for a discussion of this issue in connection with inverse sources problems. 


\section{Weighted regularization: Analysis and convergence}\label{sec:analysis}

In this section we will go further into the weighted regularization. We will discuss and analyze the nature of the bias in standard sparsity and Tikhonov regularization. Moreover, we will perform a  sanity check demonstrating that for particular simple sources, the weighted regularization will retrieve the exact solution with a marginal error in its magnitude; this in  contrast to the standard approach, see \rem{Proposition \ref{proposition:standard_method_fails} and Proposition \ref{proposition:weighted_method_works} in Section \ref{seq:weighted-sparsity}.}

\subsection{Weighted sparsity regularization}\label{seq:weighted-sparsity} 
Sparsity regularization is typically {\em not} invoked in terms of the right singular functions $\{ v_i \}$ of $K$, but rather one employs a basis $\{ \phi_i \}$ for $X$ which is motivated by modelling features. We assume that $\{ \phi_i \}$ constitute an {\em orthonormal basis} for $X$ throughout this text. 

Obviously, $x=\phi_j$ solves the equation $Kx = K \phi_j$. We will now show that this property is not inherited in an intuitive manner\footnote{or may we say trustworthy manner} when standard sparsity regularization is employed: 
\begin{equation} \label{A1}
    \min_x \left\{ \frac{1}{2} \| Kx - K \phi_j \|^2 + \alpha \sum_i | (x, \phi_i) | \right\}.     
\end{equation}
More specifically, we will show that solving \eqref{A1} will {\em not}, in most cases, yield a solution which reflects that the true source only involves the basis function $\phi_j$, even though $K$ is injective. 
\begin{proposition} \label{proposition:standard_method_fails}
    For $\alpha>0$, $x= \gamma \phi_j$ does not solve $\eqref{A1}$ for any scalar $\gamma>0$ unless 
    \begin{equation} \label{A2}
        |(K \phi_j,K \phi_l)| \leq (K \phi_j,K \phi_j) \quad \forall l. 
    \end{equation}
\end{proposition}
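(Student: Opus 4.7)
The plan is to read off a first-order optimality condition (FOC) for \eqref{A1}. Since the objective
$$F(x) = \tfrac{1}{2}\|Kx - K\phi_j\|^2 + \alpha \sum_i |(x,\phi_i)|$$
is convex, $x^\star = \gamma\phi_j$ (with $\gamma>0$) is a global minimizer if and only if $0 \in \partial F(\gamma\phi_j)$. The smooth part contributes the gradient $K^*(K(\gamma\phi_j)-K\phi_j) = (\gamma-1) K^*K\phi_j$. The sparsity term has subdifferential at $\gamma\phi_j$ consisting of all elements of the form $\sum_i \xi_i \phi_i$ with $\xi_j = 1$ (the unique sign at the nonzero $j$-th coordinate, since $\gamma>0$) and $\xi_l \in [-1,1]$ for $l\neq j$ (the free choice at zero coordinates). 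Thus the FOC asserts the existence of such $\{\xi_l\}_{l\neq j}$ with
$$(\gamma-1) K^*K\phi_j + \alpha\phi_j + \alpha \sum_{l\neq j} \xi_l \phi_l = 0.$$

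Next I would test this identity against the basis $\{\phi_i\}$. Pairing with $\phi_j$ and using $(K^*K\phi_j,\phi_j) = \|K\phi_j\|^2$ yields $(\gamma - 1)\|K\phi_j\|^2 + \alpha = 0$, hence
$$\gamma = 1 - \frac{\alpha}{\|K\phi_j\|^2}.$$
Pairing with $\phi_l$ for $l \neq j$ gives $(\gamma-1)(K^*K\phi_j,\phi_l) + \alpha \xi_l = 0$, so
$$\xi_l = \frac{(1-\gamma)(K\phi_j, K\phi_l)}{\alpha} = \frac{(K\phi_j, K\phi_l)}{\|K\phi_j\|^2},$$
upon substituting the value of $1-\gamma$ obtained above. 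The subdifferential condition $|\xi_l|\le 1$ is therefore equivalent to $|(K\phi_j, K\phi_l)| \le \|K\phi_j\|^2 = (K\phi_j, K\phi_j)$, which is precisely \eqref{A2}. Taking the contrapositive yields the claim.

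The genuinely substantive step is the one I just outlined; the rest is routine subdifferential calculus. The main care point is justifying the componentwise form of the subdifferential of $x\mapsto\sum_i|(x,\phi_i)|$ in the Hilbert-space setting, especially that the multiplier sequence $\{\xi_l\}$ assembles to a bona fide element of $X$. Since the dominant term $\alpha\phi_j + (\gamma-1)K^*K\phi_j$ lies in $X$ by compactness of $K$, writing the relation above shows that $\sum_{l\neq j}\xi_l\phi_l$ must equal a specific element of $X$, so square-summability is automatic and no further technicalities arise. Note also that positivity of $\gamma$ forces $\alpha < \|K\phi_j\|^2$; this is consistent with the statement (the proposition only claims necessity of \eqref{A2}, not sufficiency).
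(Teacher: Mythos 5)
Your proposal is correct and follows essentially the same route as the paper: write the first-order optimality condition for the convex functional, use the componentwise description of the subdifferential of $x\mapsto\sum_i|(x,\phi_i)|$ at $\gamma\phi_j$, and test against $\phi_j$ (forcing $\gamma=1-\alpha/\|K\phi_j\|^2$, which also rules out $\gamma\ge 1$) and against $\phi_l$, $l\neq j$ (forcing $|(K\phi_j,K\phi_l)|\le\|K\phi_j\|^2$). Your remark on the square-summability of the multiplier sequence is a small extra care point that the paper leaves implicit.
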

\begin{proof}
    The first order optimality condition yields that a solution $x$ of \eqref{A1} must be such that 
    \begin{equation} \label{A3}
        0 \in K^*Kx-K^*K \phi_j + \alpha \partial \sum_i | (x, \phi_i) |, 
    \end{equation}
    where $\partial$ denotes the subgradient/subdifferential. Here, 
    \begin{equation*}
        \partial \sum_i | (x, \phi_i) | = \sum_i \rho_i (x) \phi_i, 
    \end{equation*}
    where 
    \begin{equation}
        \rho_i (x) = \left\{ 
        \begin{array}{ll}
           1,  & (x, \phi_i) > 0, \\
           -1,  & (x, \phi_i) < 0, \\
           \left[ -1,1 \right], & (x, \phi_i) = 0. 
        \end{array}
        \right. \label{eq:rho}
    \end{equation}
    With $x= \gamma \phi_j$, $\gamma >0$, we get  
    \begin{equation*}
        \rho_i (\gamma \phi_j) = \rho_i = \left\{ 
        \begin{array}{ll}
           1,  & i=j, \\
           \left[ -1,1 \right] & i \neq j,  
        \end{array}
        \right.
    \end{equation*}
    and \eqref{A3} becomes 
    \begin{equation*}
        K^* K \phi_j \in \frac{\alpha}{1-\gamma} \sum_i \rho_i \phi_i. 
    \end{equation*}
    For $\gamma \in (0,1)$ we hence obtain the following requirements that $x= \gamma \phi_j$ must fulfill in order to solve \eqref{A3}: 
    \begin{align}
        \label{A4}
        (K^* K \phi_j, \phi_j) &= \frac{\alpha}{1-\gamma}, \\ 
        \label{A5}
        - \frac{\alpha}{1-\gamma} \leq (K^* K \phi_j,& \phi_l) \leq \frac{\alpha}{1-\gamma} \quad l \neq j, 
    \end{align}
    which yields \eqref{A2}. 
    
    The possibility $\gamma=1$ is excluded since the subgradient in \eqref{A3} does not contain the zero element when $x=\phi_j$.  Note also that \eqref{A4} cannot hold if $\gamma > 1$ because $\alpha > 0$ and the left-hand-side of this equation is non-negative.
\end{proof}

Motivated by Proposition \ref{proposition:standard_method_fails}, we will now consider a weighted version of sparsity regularization. To this end, recall the definition \eqref{eq:TSVD} of $\Kdk$ and 
let us define the weighting operator   
\begin{equation}\label{eq:W}
    \Wk \phi_i = \left\{ 
    \begin{array}{cc}
     \| \Pk \phi_i \| \phi_i   & \mbox{if }  \| \Pk \phi_i \| \geq \tau, \\
      \tau \phi_i  &  \mbox{if }  \| \Pk \phi_i \| < \tau,
    \end{array}
    \right.
\end{equation}
for $i=1,2,\ldots,\infty$. Here, $\tau > 0$ is a threshold value and   
\begin{equation} \label{eq:Pk}
    \Pk:X \rightarrow \nullspace{\Kk}^\perp, \quad \Pk=\Kdk \Kk 
\end{equation}
denotes the orthogonal projection operator which maps onto the orthogonal complement $\nullspace{\Kk}^\perp$ of the null space $\nullspace{\Kk}$ of $\Kk$. We have previously analyzed the mathematical and computational properties of $\Wk$ for non-injective operators, focusing on the Euclidean framework and finite dimensional vector spaces without a threshold $\tau$, in a series of papers \cite{elvetun2021a,elvetun21b,elvetun2023weighted,elvetun2024identifying}. 

The need for a threshold in the infinite dimensional setting is motivated by the following observation. 
\begin{proposition}
    Let $\Pk$ be the orthogonal projection \eqref{eq:Pk} and let $\{ \phi_i \}$ be an orthonormal basis for the separable Hilbert space $X$. Then
    \begin{equation*}
        \lim_{i \rightarrow \infty} \| \Pk \phi_i \| = 0. 
    \end{equation*}
\end{proposition}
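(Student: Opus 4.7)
The plan is to exploit the fact that $P_k$ is the orthogonal projection onto the \emph{finite}-dimensional subspace $\mathcal{N}(K_k)^\perp = \mathrm{span}\{v_1,\ldots,v_k\}$, and then apply Bessel's inequality to each of the (finitely many) singular vectors $v_j$ expanded in the orthonormal basis $\{\phi_i\}$.

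Concretely, I would first write the projection explicitly as
\begin{equation*}
    P_k \phi_i \;=\; \sum_{j=1}^{k} (\phi_i, v_j)\, v_j,
\end{equation*}
so that, by orthonormality of $\{v_1,\ldots,v_k\}$,
\begin{equation*}
    \|P_k \phi_i\|^2 \;=\; \sum_{j=1}^{k} |(\phi_i, v_j)|^2 .
\end{equation*}
Next, for each fixed $j \in \{1,\ldots,k\}$, since $\{\phi_i\}$ is an orthonormal basis of $X$, Parseval's identity yields $\sum_{i=1}^{\infty} |(v_j, \phi_i)|^2 = \|v_j\|^2 = 1 < \infty$, so in particular $|(\phi_i, v_j)| \to 0$ as $i \to \infty$.

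Finally, because the sum defining $\|P_k \phi_i\|^2$ contains only the fixed finite number $k$ of terms, each of which tends to zero, the whole expression tends to zero. I do not anticipate a genuine obstacle here; the only thing one must be careful about is to keep $k$ fixed while $i \to \infty$, since the argument would break down if $k$ were allowed to grow with $i$ (the infinite sum $\sum_{j=1}^\infty |(\phi_i, v_j)|^2 = \|\phi_i\|^2 = 1$ obviously does not vanish). The key structural point the proof isolates is that the ``fictitious null space'' complement $\mathcal{N}(K_k)^\perp$ is finite-dimensional, which is precisely what makes the thresholding in \eqref{eq:W} necessary: without it, the weights $\|P_k \phi_i\|$ would become arbitrarily small for large $i$ and the weighted penalty would degenerate.
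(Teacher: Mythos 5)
Your proof is correct and follows essentially the same route as the paper's: write $\|P_k\phi_i\|^2 = \sum_{j=1}^k |(\phi_i,v_j)|^2$, use Parseval's identity for each fixed $v_j$ in the basis $\{\phi_i\}$ to get $|(\phi_i,v_j)|\to 0$, and conclude by finiteness of the sum over $j$. Your closing remark correctly identifies the finite-dimensionality of $\mathcal{N}(K_k)^\perp$ as the crucial structural point, exactly the observation the paper uses to motivate the threshold $\tau$ in the weight definition.
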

\begin{proof}
    It follows from \eqref{eq:tfwd} that 
    \begin{equation} \label{eq:Nk_orthogonal}
        \nullspace{\Kk}^\perp = \mathrm{span} \{ v_1, v_2, \ldots, v_k \},  
    \end{equation}
    where $v_1, v_2, \ldots, v_k$ is an orthonormal set of vectors. Now, since $\{ \phi_i \}$ is an orthonormal basis for $X$,  
    \begin{equation*}
        v_j = \sum_{i=1}^\infty (v_j,\phi_i) \phi_i \Rightarrow  \| v_j \|^2 = \sum_{i=1}^\infty
        |(v_j,\phi_i)|^2  
    \end{equation*}
    for $j=1,2\ldots,k$,  which implies that 
    \begin{equation*}
        \lim_{i \rightarrow \infty} |(v_j,\phi_i)|^2 = 0 \mbox{ for } j=1,2\ldots,k. 
    \end{equation*}
    Furthermore, cf. \eqref{eq:Pk} and \eqref{eq:Nk_orthogonal},  
    \begin{equation*}
        \Pk \phi_i = \sum_{j=1}^k (\phi_i,v_j) v_j \Rightarrow \| \Pk \phi_i \|^2 = \sum_{j=1}^k 
        |(v_j,\phi_i)|^2, 
    \end{equation*}
    and we conclude that 
    \begin{equation*}
        \lim_{i \rightarrow \infty} \| \Pk \phi_i \|^2 = \sum_{j=1}^k \lim_{i \rightarrow \infty} 
        |(v_j,\phi_i)|^2 = 0, 
    \end{equation*}
    which completes the argument. 
\end{proof}
\noindent This proposition reveals that, without a threshold, the operator $\W_k:X \rightarrow X$ would not be continuously invertible using the standard norm induced by the inner product of $X$. This is the reason for introducing a threshold $\tau >0$ in \eqref{eq:W}. 

Let us now consider the weighted counterpart to \eqref{A1}:
\begin{equation} \label{A6}
    \min_x \left\{ \frac{1}{2} \| \Pk x - \Kdk K \phi_j \|^2 + \alpha \sum_i \wki | (x, \phi_i) | \right\},      
\end{equation}
where we use the notation 
\begin{equation} \label{A6.01}
    \wki = \left\{ 
    \begin{array}{cc}
     \| \Pk \phi_i \|   & \mbox{if }  \| \Pk \phi_i \| \geq \tau, \\
      \tau &  \mbox{if }  \| \Pk \phi_i \| < \tau.
    \end{array}
    \right. 
\end{equation}
Note that the regularization operator in \eqref{A6} indirectly depends on the upper bound $\delta$ for the noise  because the truncation parameter $k$ typically is depending on $\delta$, cf. the second paragraph of Section \ref{sec:introduction}.  
Also, observe that we use the projection $\Pk$ in the fidelity term in \eqref{A6} instead of the forward operator $\K$. In fact, we have not been able to establish many of the results presented in \cite{elvetun2023weighted,elvetun2024identifying} when employing the standard fidelity term. We will briefly return to this issue below when we analyze the zero-regularization counterpart to \eqref{A6}. 

\rem{In Proposition \ref{proposition:weighted_method_works}, we}  verify that \eqref{A6}, in contrast to \eqref{A1}, has a solution which equals the true source $\phi_j$ multiplied by a positive scalar and that this solution is unique if the images under $\Kk$ of distinct basis functions are not \rem{parallel.}
\begin{proposition} \label{proposition:weighted_method_works}
    For $\alpha \in \left(0, \| \Pk \phi_j \| \right)$, 
    \begin{equation} \label{A6.1}
    x = \left( 1- \frac{\alpha}{\| \Pk \phi_j \|} \right) \phi_j 
    \end{equation}
    solves \eqref{A6} provided that $\|\Pk\phi_j\| \geq \tau$. Furthermore, if in addition 
    \begin{equation}
    \Kk \phi_l \neq c \Kk \phi_q, \quad l \neq q, c \in \mathbb{R}, \label{eq:nonpar}
\end{equation}
then \eqref{A6.1} is the unique solution of \eqref{A6}. 
\end{proposition}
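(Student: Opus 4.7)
The plan is to verify the proposed minimizer $x = \gamma\phi_j$ with $\gamma = 1 - \alpha/\|\Pk\phi_j\|$ via the subdifferential first-order condition for the convex functional in \eqref{A6}, and then to leverage the same computation to obtain uniqueness under assumption \eqref{eq:nonpar}. The essential preliminary observations are that $\Pk$ is a self-adjoint projection (so $\Pk^* = \Pk$ and $\Pk^2 = \Pk$) and that $\Kdk \K = \Pk$, which follows directly from the SVD representation in \eqref{eq:TSVD}. In particular $\Kdk \K \phi_j = \Pk \phi_j$, so the gradient of the fidelity $\frac{1}{2}\|\Pk x - \Pk\phi_j\|^2$ reduces to $\Pk x - \Pk\phi_j$.

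Inserting the candidate $x = \gamma\phi_j$ turns this gradient into $(\gamma-1)\Pk\phi_j$, so the subdifferential optimality condition reduces to
\begin{equation*}
\frac{1}{\|\Pk\phi_j\|}\Pk\phi_j \;=\; \sum_i \wki \rho_i \phi_i,
\end{equation*}
with $\rho_j = 1$ (forced because $\gamma > 0$) and $\rho_l \in [-1,1]$ for $l\neq j$, following the description of the subdifferential in \eqref{eq:rho}. Testing against $\phi_j$ and using both $(\Pk\phi_j,\phi_j) = \|\Pk\phi_j\|^2$ and $\wkj = \|\Pk\phi_j\|$ (which holds by \eqref{A6.01} since $\|\Pk\phi_j\| \geq \tau$) verifies the $j$-th coordinate. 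Testing against $\phi_l$ for $l\neq j$ leaves the requirement $|\rho_l| = |(\Pk\phi_j,\Pk\phi_l)|/(\|\Pk\phi_j\|\,\wkl) \leq 1$, which follows from the Cauchy--Schwarz inequality combined with the universal lower bound $\wkl \geq \|\Pk\phi_l\|$ baked into \eqref{A6.01}. This settles existence of the claimed solution.

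For uniqueness, let $x_2$ be any other minimizer. Strict convexity of the squared norm as a function of $\Pk x$, coupled with convexity of the weighted $\ell^1$-term, forces $\Pk x_2 = \Pk x_1 = \gamma \Pk\phi_j$, since otherwise the midpoint of $x_1$ and $x_2$ would strictly decrease the objective. The subdifferential condition for $x_2$ then has exactly the same right-hand side $\Pk\phi_j/\|\Pk\phi_j\|$, so the identity $\rho_l = (\Pk\phi_j,\Pk\phi_l)/(\|\Pk\phi_j\|\,\wkl)$ must hold for every $l\neq j$. Under \eqref{eq:nonpar}, the injectivity of $\Kk$ on $\nullspace{\Kk}^\perp$ together with $\Kdk \Kk = \Pk$ implies that $\Pk\phi_l$ and $\Pk\phi_j$ are never parallel for $l\neq j$. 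Consequently Cauchy--Schwarz becomes strict whenever $\wkl = \|\Pk\phi_l\|$, and in the remaining case $\wkl = \tau > \|\Pk\phi_l\|$ one already has $|\rho_l|<1$ for free. In all situations $|\rho_l|<1$, which through \eqref{eq:rho} forces $(x_2,\phi_l) = 0$; combined with $\Pk x_2 = \gamma\Pk\phi_j$ and $\Pk\phi_j\neq 0$, this pins $x_2 = \gamma\phi_j = x_1$.

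The main obstacle is the uniqueness step. The fidelity only controls $\Pk x$, so potential non-uniqueness a priori lives in the infinite-dimensional subspace $\nullspace{\Kk}$, and the weighted $\ell^1$ penalty must do all the selection work. Assumption \eqref{eq:nonpar} is exactly what upgrades the pointwise bound $|\rho_l|\leq 1$ to a strict inequality for every $l\neq j$, and this strictness is the key ingredient for forcing every off-diagonal coefficient of an alternative minimizer to vanish.
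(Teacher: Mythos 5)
Your proof is correct. The existence half is essentially the paper's argument: first-order optimality for the convex functional, testing the subgradient identity against $\phi_j$ and $\phi_l$, and closing with Cauchy--Schwarz plus the bound $\wkl \geq \|\Pk\phi_l\|$. Your uniqueness half, however, takes a genuinely different route. The paper argues entirely on the primal side: it evaluates $\mathcal{J}_{k,\alpha}$ at an arbitrary $z = \gamma\phi_j + r$ and, through a chain of estimates, shows $\mathcal{J}_{k,\alpha}(z) \geq \mathcal{J}_{k,\alpha}(\gamma\phi_j) + \tfrac{1}{2}\|\Pk r\|^2$ with strict inequality under \eqref{eq:nonpar}; as the authors note, this simultaneously re-proves optimality of $\gamma\phi_j$, so their first-order analysis is only needed to produce the formula for $\gamma$. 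You instead run a dual-certificate argument: strict convexity of the fidelity in $\Pk x$ forces $\Pk x_2 = \gamma\Pk\phi_j$ at any other minimizer, the stationarity condition then pins the off-support subgradient coordinates to $\rho_l = (\Pk\phi_j,\Pk\phi_l)/(\wkl\|\Pk\phi_j\|)$, and \eqref{eq:nonpar} upgrades $|\rho_l|\leq 1$ to $|\rho_l|<1$ (your case split between $\wkl=\|\Pk\phi_l\|$ and $\wkl=\tau>\|\Pk\phi_l\|$ is handled correctly), which by \eqref{eq:rho} kills every coefficient $(x_2,\phi_l)$ for $l\neq j$ and then $\Pk\phi_j\neq 0$ fixes the remaining scalar. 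Both arguments are valid; the paper's is more self-contained and avoids invoking optimality conditions for the competitor, while yours isolates the standard sparse-recovery mechanism (a strictly feasible dual certificate forces support containment) and would adapt more transparently to a multi-atom almost-exact-recovery statement, which the paper explicitly leaves open. The only caveat, shared with the paper's existence argument, is that the formula $\partial\RWk(x)=\sum_i\wki\rho_i(x)\phi_i$ and the membership of such sums in $X$ are used without comment in the infinite-dimensional setting.
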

\begin{proof}
    A straightforward calculation reveals that 
    \begin{equation} \label{A6.9}
        \Kdk K = \Kdk \Kk, 
    \end{equation}
    and it follows that we can write \eqref{A6} in the form 
    \begin{equation*}
    \min_x \left\{ \frac{1}{2} \| \Pk x - \Pk \phi_j \|^2 + \alpha \sum_i \wki | (x, \phi_i) | \right\}.       
\end{equation*}
An argument very similar to the reasoning leading to \eqref{A4}-\eqref{A5} reveals that $x=\gamma \phi_j$ solves  \eqref{A6} if, and only if\footnote{The cost-functional \eqref{A6} is convex and the first order optimality condition is thus both sufficient and necessary.},  
    \begin{align}
        \label{A7}
        (\Pk \phi_j, \phi_j) &= \wkj \frac{\alpha}{1-\gamma}, \\ 
        \label{A8}
        - \wkl \frac{\alpha}{1-\gamma} \leq (\Pk \phi_j,& \phi_l) \leq \wkl \frac{\alpha}{1-\gamma} \quad l \neq j, 
    \end{align}
where we used the fact the $\Pk$ is an orthogonal projection and hence that \rem{$\Pk^2 = \Pk$ and $\Pk^* = \Pk$}. 

Equation \eqref{A7} yields that 
\begin{equation} \label{A8.01}
    \gamma = 1 - \frac{\wkj \alpha}{\| \Pk \phi_j \|^2} = 1 - \frac{\alpha}{\| \Pk \phi_j \|}, 
\end{equation} 
where we have used the definition \eqref{A6.01} of $\wkj$ and the assumption that $\| \Pk \phi_j \| \geq \tau$.
Hence,  
\begin{equation*}
    \wkl \frac{\alpha}{1-\gamma} = \wkl \| \Pk \phi_j \|,
\end{equation*}
and it follows that \eqref{A8} can be written in the form 
\begin{equation} \label{A8.1}
    -\wkl \|\Pk\phi_j\| \leq (\Pk \phi_j, \phi_l) \leq \wkl \|\Pk\phi_j\|.  
\end{equation}
Keeping in mind that $\Pk^* \Pk = \Pk$, we deduce from the Cauchy-Schwarz inequality and \eqref{eq:W} that 
\begin{equation*}
(\Pk \phi_j, \phi_l) = (\Pk \phi_j, \Pk \phi_l) \leq \| \Pk \phi_l \| \| \Pk \phi_j \| \leq \wkl \|\Pk\phi_j\|,    
\end{equation*}
and it thus follows that \eqref{A8.1} holds. 

We conclude that 
\begin{equation*}
    x = \gamma \phi_j = \left( 1 - \frac{\alpha}{\| \Pk \phi_j \|} \right) \phi_j 
\end{equation*}
satisfies both \eqref{A7} and \eqref{A8} and hence solves \eqref{A6}. 


To show uniqueness, we start by evaluating 
\begin{equation*}
    \mathcal{J}_{k,\alpha}(x) = \frac{1}{2}\| \Pk x - \Kdk K \phi_j \|^2 + \alpha \sum_i \wki | (x, \phi_i) |      
\end{equation*}
for $x = \gamma\phi_j$, where $\gamma$ is chosen according to \eqref{A8.01}. We get, also employing \eqref{eq:Pk} and \eqref{A6.9}, 
\begin{equation} \label{A8.11}
    \mathcal{J}_{k,\alpha}(\gamma\phi_j) = \frac{1}{2}\alpha^2 + \alpha\gamma\wkj.
\end{equation}
Let $z \in X$ be arbitrary. We will show that, if $r = z - \gamma\phi_j \neq 0$, then $\mathcal{J}_{k,\alpha}(z) > \mathcal{J}_{k,\alpha}(\gamma\phi_j)$. Indeed, again using the assumption that $\| \Pk \phi_j \| \geq \tau$, see also \eqref{A6.01}, 
\begin{eqnarray*}
    \mathcal{J}_{k,\alpha}(z) &=& \frac{1}{2}\| \Pk (\gamma\phi_j + r) - \Pk \phi_j \|^2 + \alpha \sum_i \wki | (\gamma\phi_j + r, \phi_i) | \\ &=&
    \frac{1}{2}\left\| \Pk r - \frac{\alpha}{\wkj} \Pk \phi_j \right\|^2 + \alpha \sum_i \wki | (\gamma\phi_j + r, \phi_i) | \\ &=&
    \frac{1}{2}\left\| \Pk r\right\|^2 - \frac{\alpha}{\wkj} (\Pk r,\Pk \phi_j)  + \frac{1}{2}\alpha^2 + \alpha \sum_i \wki | (\gamma\phi_j + r, \phi_i) |    
\end{eqnarray*}
Furthermore, if we expand $r$ in terms of the $\{\phi_i\}$ basis, i.e., $r = \sum_i (r,\phi_i)\phi_i$, we get
\begin{eqnarray*}
    \mathcal{J}_{k,\alpha}(z) &=&  \frac{1}{2}\left\| \Pk r\right\|^2 - \frac{\alpha}{\wkj} (r,\Pk \phi_j)  + \frac{1}{2}\alpha^2 + \alpha \sum_i \wki | (\gamma\phi_j + r, \phi_i) | \\ &=&
    \frac{1}{2}\left\| \Pk r\right\|^2 - \frac{\alpha}{\wkj} \sum_{i=1}^\infty(r,\phi_i) (\Pk\phi_i, \Pk \phi_j)  + \frac{1}{2}\alpha^2 + \alpha \sum_i \wki | (\gamma\phi_j + r, \phi_i) | \\ &=&
    \frac{1}{2}\left\| \Pk r\right\|^2 - \frac{\alpha}{\wkj} \sum_{i\neq j}(r,\phi_i) (\Pk\phi_i, \Pk \phi_j) - \frac{\alpha}{\wkj} (r,\phi_j)(\Pk\phi_j,\Pk\phi_j) \\ &+& \frac{1}{2}\alpha^2 + \alpha \sum_i \wki | (\gamma\phi_j + r, \phi_i) | \\
    &\geq&
    \frac{1}{2}\left\| \Pk r\right\|^2 - \frac{\alpha}{\wkj} \sum_{i\neq j}|(r,\phi_i)| \wki \wkj - \alpha\wkj (r,\phi_j) \\ &+& \frac{1}{2}\alpha^2 + \alpha \sum_{i\neq j} \wki | (r, \phi_i) | + \alpha \wkj|\gamma + (r, \phi_j)| \\ &=&
    \frac{1}{2}\left\| \Pk r\right\|^2 - \alpha\wkj (r,\phi_j) + \frac{1}{2}\alpha^2  + \alpha \wkj | \gamma + (r, \phi_j) | \\
    &=&
    \frac{1}{2}\left\| \Pk r\right\|^2 + \frac{1}{2}\alpha^2  + \alpha \wkj \left( | \gamma + (r, \phi_j) | - (r,\phi_j) \right).
\end{eqnarray*}
Thus, since 
\begin{equation*}
    |\gamma + (r,\phi_j)| - (r,\phi_j) \geq \gamma > 0
\end{equation*}
for any value of $(r,\phi_j)$, we derive that 
\begin{equation} \label{A8.12}
  \mathcal{J}_{k,\alpha}(z) \geq \frac{1}{2}{\alpha}^2 + \alpha\gamma\wkj + \frac{1}{2}\|\Pk r\|^2.
\end{equation}

In this argument we employed the Cauchy-Schwarz inequality:
\begin{align}
    \nonumber
   \sum_{i\neq j}(r,\phi_i) (\Pk\phi_i, \Pk \phi_j) & \leq \sum_{i\neq j} |(r,\phi_i)| \, |(\Pk\phi_i, \Pk \phi_j)| \\
   \label{A8.2}
   &\leq \sum_{i\neq j}|(r,\phi_i)| \, \| \Pk\phi_i \| \| \Pk \phi_j \| \\
   \nonumber
   &\leq \sum_{i\neq j} |(r,\phi_i)| \, \wki \wkj,  
\end{align}
where the last inequlity follows from \eqref{A6.01}. Recall that $r=z-\gamma \phi_j$. Hence, if $z \neq \gamma \phi_j$, then there must exist $i^* \neq j$ such that $(r,\phi_{i^*}) \neq 0$. Since $\Pk=\Kk^\dagger \Kk$, assumption \eqref{eq:nonpar} implies that $\Pk \phi_{i^*} \neq c \Pk \phi_j, c \in \mathbb{R}$, which yields a 
strict inequality in \eqref{A8.2}. Hence, we conclude that for any $z \neq \gamma\phi_j$, we have
\begin{equation*}
  \mathcal{J}_{k,\alpha}(z) > \frac{1}{2}{\alpha}^2 + \alpha\gamma\wkj = \mathcal{J}_{k,\alpha}(\gamma\phi_j),
\end{equation*}
cf. \eqref{A8.12} and \eqref{A8.11}. 
%
\end{proof}
The uniqueness part of this proof does not only prove the uniqueness of the solution $\gamma \phi_j$, provided that \eqref{eq:nonpar} holds, but also shows that $\gamma \phi_j$, in general, solves \eqref{A6}. Strictly speaking, the first part of the proof, i.e., the part employing the first order optimality conditions, is only needed to obtain the formula \eqref{A8.01} for $\gamma$. Nevertheless, for the sake of completeness, we decided to also include a thorough analysis of the first order optimality conditions.     

\begin{remark}
 The property expressed in Proposition \ref{proposition:weighted_method_works} is what we refer to as {\em unbiased wrt. to the basis $\{ \phi_i \}$}: Each member $\phi_j$ of this basis can be reconstructed, except for a small change in its magnitude, by solving \eqref{A6}.   
\end{remark}

This ability of the weighted regularization method to recover a single basis function - regardless of the size of truncation parameter $k$ - is in stark contrast to the result for the unweighted case, cf. Proposition \ref{proposition:standard_method_fails}. However, to describe a general dependence between the truncation parameter and the capability of the weighted regularization procedure to recover \textit{several} basis functions is more involved. We can, in general, only note that in a truly ideal setting, with noise-free data and without truncation of the forward operator, i.e., the injective case, we could in theory recover any combination of basis functions. 
Nevertheless, it remains to establish a more general almost-exact-recovery result than Proposition \ref{proposition:weighted_method_works}.

As briefly mentioned above, it would be more in line with standard theory to consider 
\begin{equation} \label{A9}
    \min_x \left\{ \frac{1}{2} \| \Kk x - K \phi_j \|^2 + \alpha \sum_i \wki | (x, \phi_i) | \right\}      
\end{equation}
instead of \eqref{A6}. However, we have not succeeded in adapting the argument for Proposition \ref{proposition:weighted_method_works} to the formulation \eqref{A9}. On the other hand, the zero regularization counterpart to \eqref{A9} \rem{is analyzed in Proposition \ref{prop:bp}}, where we note that $K \phi_j$ may not belong the range of $\Kk$ such that a modified basis pursuit problem is \rem{required.} 
\begin{proposition}\label{prop:bp}
    Let $\Kk$ be the truncated approximation \eqref{eq:tfwd} of $\K$ and let $\{ \wki \}$ denote the weights \eqref{A6.01}. Define the set 
    \begin{equation*}
        S = \argmin_{x \in X} \{ \|\Kk x - \K\phi_j\| \}.
    \end{equation*}
    Then,
    \begin{equation} \label{A10}
        \phi_j \in \argmin_{x \in S} \left\{\sum_i \wki| (x,\phi_i)|\right\}, 
    \end{equation}
     provided that $\|\Pk\phi_j\| \geq \tau$. If in addition \eqref{eq:nonpar} holds, then $\phi_j$ is the only solution of \eqref{A10}.
\end{proposition}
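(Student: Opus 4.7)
The plan is to reduce \eqref{A10} to a constrained weighted $\ell^1$ minimization on an affine subspace, and then apply convex subgradient optimality. The first step is to characterize $S$ explicitly. Since $\Ran(\Kk)=\mathrm{span}\{u_1,\ldots,u_k\}$ and $\K\phi_j = \sum_{i=1}^\infty \sigma_i(\phi_j,v_i)u_i$, the orthogonal projection of $\K\phi_j$ onto $\Ran(\Kk)$ is exactly $\Kk\phi_j$. Hence $x\in S$ iff $\Kk x = \Kk\phi_j$, equivalently $\Pk x = \Pk\phi_j$, so $S = \phi_j + \nullspace{\Kk}$ and the problem \eqref{A10} becomes the minimization of $f(x):=\sum_i \wki|(x,\phi_i)|$ over this affine subspace.

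For the membership claim I would invoke the standard convex optimality condition: $\phi_j$ is optimal iff there exists $g\in\partial f(\phi_j)$ with $g\in\nullspace{\Kk}^\perp=\Ran(\Pk)$. The natural candidate is
\begin{equation*}
g = \frac{\Pk\phi_j}{\|\Pk\phi_j\|},
\end{equation*}
which is well-defined because $\|\Pk\phi_j\|\geq\tau>0$ and manifestly lies in $\Ran(\Pk)$. To verify $g\in\partial f(\phi_j)$, I would read off the coefficients in the basis $\{\phi_i\}$: the $\phi_j$-coefficient equals $(g,\phi_j)=\|\Pk\phi_j\|=\wkj$ by \eqref{A6.01} (with the correct sign since $(\phi_j,\phi_j)=1>0$), and for $i\neq j$ the Cauchy--Schwarz inequality together with \eqref{A6.01} yields $|(g,\phi_i)| = |(\Pk\phi_j,\Pk\phi_i)|/\|\Pk\phi_j\| \leq \|\Pk\phi_i\|\leq \wki$, which is precisely the bound required for a subgradient at a coordinate where $(x,\phi_i)=0$.

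For uniqueness, let $z\in S$, $z\neq\phi_j$, and set $r=z-\phi_j\in\nullspace{\Kk}\setminus\{0\}$. Using the elementary estimate $|1+s|\geq 1+s$ I would first derive
\begin{equation*}
f(\phi_j+r) - f(\phi_j) \;\geq\; \wkj\,(r,\phi_j) + \sum_{i\neq j}\wki\,|(r,\phi_i)|.
\end{equation*}
Expanding $r=\sum_i(r,\phi_i)\phi_i$ and testing the identity $\Pk r=0$ against $\Pk\phi_j$ supplies the substitution
\begin{equation*}
\wkj\,(r,\phi_j) = -\frac{1}{\|\Pk\phi_j\|}\sum_{i\neq j}(r,\phi_i)\,(\Pk\phi_i,\Pk\phi_j),
\end{equation*}
so that Cauchy--Schwarz combined with $\wki\geq\|\Pk\phi_i\|$ delivers the lower bound $\geq 0$. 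The delicate step---which I expect to be the main obstacle---is upgrading this to a strict inequality: I would first observe that $r\in\nullspace{\Kk}\setminus\{0\}$ forces $(r,\phi_{i^*})\neq 0$ for some $i^*\neq j$ (the alternative $r=c\phi_j$ with $c\neq 0$ is inconsistent with $\Pk r = 0$, since $\|\Pk\phi_j\|\geq\tau>0$), and then invoke \eqref{eq:nonpar} together with $\Pk=\Kdk\Kk$ to conclude that $\Pk\phi_{i^*}$ is not a scalar multiple of $\Pk\phi_j$, making Cauchy--Schwarz strict at the index $i^*$ and hence $f(\phi_j+r)>f(\phi_j)$.
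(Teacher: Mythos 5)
Your proof is correct, and while the reduction of $S$ to the affine set $\{x:\Kk x=\Kk\phi_j\}$ coincides with the paper's (projection onto $\Ran(\Kk)$ versus the paper's explicit Pythagorean expansion using orthogonality of $\Ran(\Kk)$ and $\Ran(\Kk-\K)$ -- same content), the optimality argument is packaged differently. The paper proves membership by a purely primal two-line estimate: for any feasible $x$, $\wkj=\|\Pk\phi_j\|=\|\Pk x\|=\|\sum_i(x,\phi_i)\Pk\phi_i\|\le\sum_i\|\Pk\phi_i\|\,|(x,\phi_i)|\le\sum_i\wki|(x,\phi_i)|$, so the objective value of $\phi_j$ is a lower bound for all competitors; uniqueness follows because \eqref{eq:nonpar} makes the triangle inequality strict for any $x\in S$ supported on more than one index. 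You instead exhibit the explicit dual certificate $g=\Pk\phi_j/\|\Pk\phi_j\|\in\partial f(\phi_j)\cap\nullspace{\Kk}^\perp$, and prove uniqueness by a perturbation expansion $f(\phi_j+r)-f(\phi_j)$ over $r\in\nullspace{\Kk}$, which in fact mirrors the paper's own uniqueness computation for Proposition \ref{proposition:weighted_method_works} rather than its terser treatment here. Both routes hinge on the same two ingredients -- Cauchy--Schwarz and $\wki\ge\|\Pk\phi_i\|$ -- but the paper's version avoids subdifferential calculus entirely, while your certificate $g$ makes the connection to standard $\ell^1$ exact-recovery theory explicit and dovetails with the formula $\lambda_{k,j}=\wkj^{-1}(\Kk\Kk^*)^\dagger\Kk\phi_j$ appearing later in Proposition \ref{prop:uniform_bound} (indeed $\Kk^*\lambda_{k,j}=\wkj^{-1}\Pk\phi_j$ is, up to the weight normalization, your $g$). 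Your handling of the strictness step, including ruling out $r=c\phi_j$ via $\|\Pk\phi_j\|\ge\tau>0$ and noting that \eqref{eq:nonpar} transfers from $\Kk$ to $\Pk=\Kdk\Kk$, is more careful than the paper's one-sentence appeal and is entirely sound.
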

\begin{proof}
    We have that
    \begin{alignat*}{2}
        \|\Kk x - \K \phi_j\|^2 &= \|[\Kk x - \Kk\phi_j] &+& [\Kk - \K]\phi_j)\|^2  \\
        &= \|\Kk x - \Kk\phi_j\|^2 &-& 2(\Kk[x -\phi_j], [\Kk - \K]\phi_j) \\
        & &+&\|(\Kk - \K)\phi_j\|^2. 
    \end{alignat*}
    The ranges of $\Kk$ and $\Kk - \K$ are orthogonal because the left singular functions $\{ \sigma_i u_i \} = \{ K v_i \}$ of $K$ are orthogonal. Hence, the inner product in the second term above is zero. Furthermore, the third term is independent of $x$, and we  conclude that 
    $$S = \{x \in X: \Kk x = \Kk\phi_j\}.$$
    
    Consequently, we can rather consider the problem
    \begin{equation} \label{A11}
        \min_{x \in X} \left\{\sum_i \wki| (x,\phi_i)|\right\} \textnormal{ subject to } \Kk x = \Kk\phi_j 
    \end{equation}
    instead of \eqref{A10}. 
    Now, let $x \in X$ be such that $\Kk x = \Kk \phi_j$, which implies that $\Pk x = \Pk \phi_j$, see \eqref{eq:Pk}. Then, since we assume that $\|\Pk\phi_j\| \geq \tau$, see also \eqref{A6.01}, and $\{ \phi_i \}$ is an orthonormal basis for $X$, 
    \begin{eqnarray*}
        \wkj |(\phi_j,\phi_j)| &=& \|\Pk\phi_j\| \\
        &=& \|\Pk x\| \\
        &=& \left\|\sum_i (x,\phi_i) \Pk \phi_i \right\| \\ 
        &\leq& \sum_i \|\Pk\phi_i\| \, |(x,\phi_i)| \\
        &\leq& \sum_i \wki \, |(x,\phi_i)|.
    \end{eqnarray*}
    Finally, the assumption \eqref{eq:nonpar} guarantees that the first inequality above is strict, which completes the proof.
\end{proof} 

\subsection{Weighted Tikhonov regularization} \label{sec:weightedTikhonov}
Let us now consider standard Tikhonov regularization: 
\begin{equation} \label{D0.1}
    \min_x \left\{ \| \K x - \K x^\dagger \|^2 + \alpha \| x \|^2 \right\},  
\end{equation}
where we again explore the possibility for (partly) recovering a true source $x^\dagger$.
Since we assume that the singular values of $\K$ approach zero rapidly, we can approximate \eqref{D0.1} with  
\begin{equation} \label{D0.11}
    \min_x  \left\{ \| \Kk x - \K x^\dagger \|^2 + \alpha \| x \|^2 \right\}, 
\end{equation}
because $\K x \approx \Kk x$, cf. \eqref{eq:SVD} and \eqref{eq:tfwd}. 

The null space of $\Kk$ will influence the solution $x_{k,\alpha}$ of \eqref{D0.11}. More precisely, standard theory reveals that 
\begin{equation}\label{D0.12}
\lim_{\alpha \rightarrow 0} x_{k,\alpha} = \Kk^\dagger \K x^\dagger  \in \nullspace{\Kk}^\perp.   
\end{equation} 
Consequently, unless $x^\dagger \in \nullspace{\Kk}^\perp$, one cannot expect to obtain an unbiased approximation of $x^\dagger$ by solving \eqref{D0.11}. Roughly speaking, since $\Kk$ is non-injective, \eqref{D0.11} becomes biased -- a solution which is almost in the orthogonal complement of the null space of $\Kk$ is preferred, which might not be adequate. \rem{The next result shows} that this bias of \eqref{D0.11} is inherited by \eqref{D0.1} when $\alpha \gg \sigma_{k+1}^2$. Consequently, the weighting operator \eqref{eq:W} might also beneficially be applied in the case of Tikhonov regularization, using the fictitious null space to define the weighting.     

\begin{proposition} \label{proposition:Tikhonov}
    Let $x_\alpha$ and $x_{k,\alpha}$ denote the solutions of \eqref{D0.1} and \eqref{D0.11}, respectively. Then
    \begin{equation*}
    \| x_\alpha-x_{k,\alpha} \|_X \leq \frac{\sigma_{k+1}^2}{\alpha} \| x^\dagger \|. 
\end{equation*}
\end{proposition}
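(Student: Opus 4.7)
The plan is to use the first-order optimality conditions for the two quadratic minimization problems to obtain explicit SVD expansions for $x_\alpha$ and $x_{k,\alpha}$, then read off the difference term by term. Setting the gradients to zero in \eqref{D0.1} and \eqref{D0.11} gives $x_\alpha = (K^*K+\alpha I)^{-1}K^*K x^\dagger$ and $x_{k,\alpha} = (K_k^*K_k+\alpha I)^{-1}K_k^*K x^\dagger$. Using \eqref{eq:SVD} and \eqref{eq:tfwd}, together with the identity $K_k^* K x^\dagger = K_k^* K_k x^\dagger$ (which follows because $K_k^* u_i = \sigma_i v_i$ for $i\le k$ and $K_k^* u_i = 0$ for $i>k$), I would diagonalize both operators in the orthonormal basis $\{v_i\}$ to obtain
\begin{equation*}
    x_\alpha = \sum_{i=1}^\infty \frac{\sigma_i^2}{\sigma_i^2+\alpha}(x^\dagger,v_i)\,v_i,
    \qquad
    x_{k,\alpha} = \sum_{i=1}^k \frac{\sigma_i^2}{\sigma_i^2+\alpha}(x^\dagger,v_i)\,v_i.
\end{equation*}

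Subtracting these two expansions, only the tail $i \ge k+1$ survives, so
\begin{equation*}
    \|x_\alpha - x_{k,\alpha}\|^2 = \sum_{i=k+1}^\infty \left( \frac{\sigma_i^2}{\sigma_i^2+\alpha} \right)^{\!2} |(x^\dagger,v_i)|^2.
\end{equation*}
For every $i \ge k+1$ we have $\sigma_i \le \sigma_{k+1}$ and $\sigma_i^2+\alpha \ge \alpha$, hence the filter factor satisfies $\sigma_i^2/(\sigma_i^2+\alpha) \le \sigma_{k+1}^2/\alpha$. Pulling this uniform bound out of the sum and using Parseval's identity $\sum_{i=k+1}^\infty |(x^\dagger,v_i)|^2 \le \|x^\dagger\|^2$ yields the claimed estimate.

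I do not foresee a real obstacle here; the argument is a standard SVD filter-factor computation. The only mildly delicate point is to confirm that $K^*K$ and $K_k^*K_k$ act in the expected way on both the head and tail of the $\{v_i\}$-expansion of $x^\dagger$, and that the right-hand sides $K^*K x^\dagger$ and $K_k^* K x^\dagger$ have compatible SVD expansions so that the two resolvents produce expressions that differ precisely in their upper summation index. Once this is set up carefully, the norm inequality follows directly from the pointwise bound on the filter factors.
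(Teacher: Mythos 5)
Your proof is correct and follows essentially the same route the paper indicates: it omits the details precisely because the bound is a direct consequence of the SVD expansions $x_\alpha=\sum_{l=1}^\infty\frac{\sigma_l}{\sigma_l^2+\alpha}(Kx^\dagger,u_l)v_l$ and $x_{k,\alpha}=\sum_{l=1}^k\frac{\sigma_l}{\sigma_l^2+\alpha}(Kx^\dagger,u_l)v_l$, which are exactly the expressions you derive (in the equivalent form with coefficients $\frac{\sigma_l^2}{\sigma_l^2+\alpha}(x^\dagger,v_l)$), followed by the same filter-factor estimate and Bessel/Parseval step.
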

\rem{We omit the proof because this result is a straightforward consequence of the standard expressions
\begin{align*}
    x_\alpha &= \sum_{l=1}^\infty \frac{\sigma_l}{\sigma_l^2+\alpha} (\K x^\dagger, u_l)^2 v_l, \\
    x_{k,\alpha} &= \sum_{l=1}^k \frac{\sigma_l}{\sigma_l^2+\alpha} (\K x^\dagger, u_l)^2 v_l. 
\end{align*}
}

For severely ill-posed problems, $\sigma_{k+1}^2$ typically becomes less than the default precision employed by software systems even for moderate values of $k$, e.g., for $k<10$. 
It hence follows \rem{from Proposition \ref{proposition:Tikhonov}} that the size of  $\| x_\alpha-x_{k,\alpha} \|$ is neglectable, also when, e.g., $\alpha \in [10^{-10},10^{-5}]$. We can therefore conclude that $x_\alpha$, for small $\alpha > 0$ such that $\alpha \gg \sigma_{k+1}^2$, also almost will belong to $\nullspace{\Kk}^\perp$ -- i.e., the property \eqref{D0.12} will often in practice be "inherited" by the solution of \eqref{D0.1}, which might not be desirable. This motivates the use of weighted Tikhonov regularization:  
\begin{equation} \label{eq:weightedTikhonov}
    \min_x \left\{ \| \Kk x - y^\delta \|^2 + \alpha \| \Wk x \|^2 \right\}, 
\end{equation}
where $\Wk$ is defined in \eqref{eq:W}. 

The approach \eqref{eq:weightedTikhonov} is studied in \cite{elvetun2021a,elvetun21b} in the finite dimensional setting and relies on a particular maximum property associated with the minimum norm solution of $\Kk x = \Kk \phi_j$. As mentioned in connection with Proposition \ref{prop:bp} above, $\K \phi_j$ may not belong to the range of $\Kk$, and Theorem 4.2 in \cite{elvetun2021a} must therefore be reconsidered. In the next result we use the notation 
\begin{equation*}
    [z]_i = (z,\phi_i) \quad \mbox{for } z \in X, i=1,2, \ldots .
\end{equation*}
That is, employing the orthonormal basis $\{ \phi_i \}$ for $X$, 
\begin{equation*}
    z = \sum_i (z,\phi_i) \phi_i = \sum_i [z]_i \phi_i.
\end{equation*}

The following lemma will be used in the next subsection to analyze the convergence of weighted sparsity regularization. \rem{Its proof is rather similar to the argument for Theorem 4.2 in \cite{elvetun2021a} and therefore presented in Appendix \ref{sec:proof_maximum}.}
\begin{lemma}\label{prop:maximum}
    Let $\Kk$ be the truncated approximation \eqref{eq:tfwd} of $\K$ and let $\{ \wki \}$ denote the weights \eqref{A6.01}. Define the set 
    \begin{equation*}
        S = \argmin_{x \in X} \{\|\Kk x - \K\phi_j\|\},
    \end{equation*}
    and denote
    \begin{equation*}
        \bar{x}_k = \argmin_{x \in S} \|x\|^2.
    \end{equation*}
    Then,
    \begin{equation}\label{eq:minnorm}
        j \in \argmax_{i \in \mathbb{N}} [\Wk^{-1} \bar{x}_k]_i = \argmax_{i \in \mathbb{N} } [\Wk^{-1} \Kdk \Kk \phi_j]_i.
    \end{equation}
     provided that $\|\Pk\phi_j\| \geq \tau$. If in addition \eqref{eq:nonpar} holds, then $j$ is the only member of the $\argmax$ set \eqref{eq:minnorm}.
\end{lemma}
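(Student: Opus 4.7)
The plan is to first identify the minimum norm element $\bar{x}_k$ explicitly. From the argument already used in the proof of Proposition \ref{prop:bp}, the ranges of $\Kk$ and $\K - \Kk$ are mutually orthogonal, so $S = \{x \in X : \Kk x = \Kk \phi_j\}$. The minimum norm solution of $\Kk x = \Kk \phi_j$ is $\Kdk \Kk \phi_j$, hence $\bar{x}_k = \Pk \phi_j$. This observation immediately yields the second equality $\Wk^{-1} \bar{x}_k = \Wk^{-1} \Kdk \Kk \phi_j$ in the statement, and reduces everything to an analysis of the coefficients of $\Wk^{-1} \Pk \phi_j$ in the basis $\{\phi_i\}$.

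Next I would exploit the diagonal action of $\Wk$ on $\{\phi_i\}$. Since $\Wk \phi_i = \wki \phi_i$ with $\wki \geq \tau > 0$, the operator $\Wk^{-1}$ is well defined, self-adjoint, and satisfies $\Wk^{-1} \phi_i = \wki^{-1} \phi_i$. Combined with the fact that $\Pk$ is an orthogonal projection ($\Pk^* = \Pk$ and $\Pk^2 = \Pk$), this gives
\begin{equation*}
[\Wk^{-1} \Pk \phi_j]_i = (\Pk \phi_j, \Wk^{-1} \phi_i) = \wki^{-1} (\Pk \phi_j, \Pk \phi_i).
\end{equation*}
At $i = j$ this evaluates to $\wkj^{-1} \|\Pk \phi_j\|^2$, which equals $\|\Pk \phi_j\|$ thanks to the assumption $\|\Pk \phi_j\| \geq \tau$ and the definition \eqref{A6.01} of $\wkj$.

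For general $i$, I would apply the Cauchy–Schwarz inequality together with the bound $\wki \geq \|\Pk \phi_i\|$ built into \eqref{A6.01}:
\begin{equation*}
\wki^{-1} (\Pk \phi_j, \Pk \phi_i) \leq \wki^{-1} \|\Pk \phi_j\| \|\Pk \phi_i\| \leq \|\Pk \phi_j\|.
\end{equation*}
Comparing with the value at $i = j$, this proves the argmax claim \eqref{eq:minnorm}.

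Finally, for uniqueness I would verify that the above inequality is strict whenever $i \neq j$. If $\Pk \phi_i = 0$, then $\wki = \tau > 0$, so $[\Wk^{-1} \Pk \phi_j]_i = 0 < \|\Pk \phi_j\|$. Otherwise, since $\Pk = \Kdk \Kk$ and $\Kdk$ is injective on $\Ran(\Kk)$, the non-parallelism assumption \eqref{eq:nonpar} carries over to give $\Pk \phi_i \neq c \Pk \phi_j$ for any $c \in \mathbb{R}$, and the Cauchy–Schwarz inequality above becomes strict. The main obstacle I anticipate is really just this bookkeeping around the two-branch definition of $\wki$ when $\|\Pk \phi_i\| < \tau$; the uniform bound $\wki \geq \|\Pk \phi_i\|$ is what lets the Cauchy–Schwarz estimate run through identically in both branches.
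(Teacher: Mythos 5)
Your proposal is correct and follows essentially the same route as the paper's proof in Appendix \ref{sec:proof_maximum}: identify $S=\{x:\Kk x=\Kk\phi_j\}$ via the orthogonality of the ranges of $\Kk$ and $\K-\Kk$, deduce $\bar{x}_k=\Pk\phi_j$, compute $[\Wk^{-1}\Pk\phi_j]_i=\wki^{-1}(\Pk\phi_j,\Pk\phi_i)$, and conclude with Cauchy--Schwarz and the bound $\wki\geq\|\Pk\phi_i\|$. Your explicit handling of the $\Pk\phi_i=0$ branch is a small extra care the paper leaves implicit, but the argument is the same.
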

\rem{Let} us briefly comment that: \eqref{eq:minnorm} shows that $\Wk^{-1} \bar{x}_k$ attains its maximum for the correct index $j$ since the "true source" equals $\phi_j$. Note that $\bar{x}_k$  is the minimum norm solution of $\min_{x \in X} \{\|\Kk x - \K\phi_j\|\}$.

\rem{This lemma and the results presented in \cite{elvetun2021a} suggest that a weighted version of the sparse recovery method analyzed in \cite{2009_Lu}, which only involves quadratic Tikhonov regularization, could achieve similar reconstructions to those obtained with \eqref{A6}, but at a lower computational cost. Since our main concern is the choice of weighting to employ in a sparsity-promoting regularizer, and not computational costs, we have not pursued this issue further. That is, we have not compared the performance of weighted versions of standard sparsity regularization with weighted modifications of the methodology suggested in \cite{2009_Lu}. Nevertheless, we will present some simulations using \eqref{eq:weightedTikhonov} in Section \ref{sec:numerical}.}

\subsection{Convergence analysis}
The main focus of this paper has been to analyze and "mitigate" the discrepancy between the canonical basis $\{ v_i \}$ for $X$, arising from the singular value decomposition of the forward operator $K$, and a basis $\{ \phi_i \}$ chosen based on its merits of representing the solution according to some criterion, e.g., sparsity. 

We will now focus on a different aspect: In order to tie the infinite-dimensional analysis together, we end this section with a classical investigation of the asymptotic behavior, as the size of the noise  tends to zero, of our weighted regularization approach. We limit our discussion to sparsity regularization. 

Let us introduce the notation
\begin{equation} \label{eq:weighted_l1_norm}
    \RWk(x) = \sum_i \wki |(x, \phi_i)|, 
\end{equation}
and consider a sequence $\{ x_{k,\alpha} \}$ satisfying 
\begin{equation}\label{eq:convS}
    x_{k,\alpha} \in \argmin_x\left\{\frac{1}{2}\left\|\Kk x - y^\delta\right\|^2 + \alpha \RWk(x)\right\},
\end{equation}
where 
\begin{equation} \label{eq:exact_solution}
 K x^\dagger = y.    
\end{equation}
Here, as above, $y^\delta$ is a noisy approximation of $y$. In line with the results presented above, we again consider a problem employing $K_k$ instead of $K$ in the fidelity term.

We introduce the source condition
\begin{equation}\label{eq:sparseSC}
    \exists \lambda_k : \ \Kk^*\lambda_k \in \partial \RWk(x^\dagger),
\end{equation}
which is rather standard with the exception that it involves $\Kk$ instead of $K$. 
Then, by modifying the proof of \cite[Theorem 2]{Burger_2004}, we obtain the following result. 
\begin{proposition}\label{thm:convSparse}
    Let $x_{k,\alpha}$ be given as in \eqref{eq:convS}. Assume that the bound \eqref{eq:noisebound} holds and that the source condition \eqref{eq:sparseSC} is satisfied. Then
    \begin{equation} \label{eq:bregman_bound}
        d_k \leq \frac{(\sigma_{k+1}\|x^\dagger\| + \delta)^2}{2\alpha} + \|\lambda_k\|(\delta + \sigma_{k+1}\|x^\dagger\|) + \frac{\alpha}{2}\|\lambda_k\|^2, 
    \end{equation}
    where $d_k$ denotes the Bregman distance between $x_{k,\alpha}$ and $x^\dagger$ for the weighted functional \eqref{eq:weighted_l1_norm} and the specific subderivative given in \eqref{eq:sparseSC}, i.e.,
    \begin{equation}\label{eq:bregman}
        d_k = \RWk(x_{k,\alpha}) - \RWk(x^\dagger) - \left(\Kk^*\lambda_k, x_{k,\alpha} - x^\dagger \right).
    \end{equation}
\end{proposition}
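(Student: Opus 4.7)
The plan is to adapt the classical Burger--Osher argument for Bregman--distance convergence rates, cf.\ \cite{Burger_2004}, with the one new ingredient being a careful treatment of the mismatch between $K$ (which generates the data via $y=Kx^\dagger$) and $K_k$ (which appears in the fidelity term of \eqref{eq:convS}). The key quantitative tool for this mismatch is the SVD bound $\|(K-\Kk)x^\dagger\| \leq \sigma_{k+1}\|x^\dagger\|$, which follows immediately from \eqref{eq:SVD} and \eqref{eq:tfwd}.

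First I would exploit that $x_{k,\alpha}$ minimizes the functional in \eqref{eq:convS}, comparing with the test element $x^\dagger$ to obtain
\begin{equation*}
    \tfrac{1}{2}\|\Kk x_{k,\alpha}-y^\delta\|^2 + \alpha\bigl[\RWk(x_{k,\alpha}) - \RWk(x^\dagger)\bigr] \leq \tfrac{1}{2}\|\Kk x^\dagger - y^\delta\|^2.
\end{equation*}
Next, writing $\Kk x^\dagger - y^\delta = (\Kk-K)x^\dagger + (y-y^\delta)$ and combining the noise bound \eqref{eq:noisebound} with the truncation bound above yields $\|\Kk x^\dagger-y^\delta\|\leq \sigma_{k+1}\|x^\dagger\|+\delta$, which produces the first term of \eqref{eq:bregman_bound}.

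The Bregman distance $d_k$ is then introduced by subtracting $\alpha(\Kk^*\lambda_k, x_{k,\alpha}-x^\dagger)$ from both sides. Moving this inner product to the range space of $\Kk$ and decomposing
\begin{equation*}
    (\lambda_k, \Kk x_{k,\alpha} - \Kk x^\dagger) = (\lambda_k, \Kk x_{k,\alpha}-y^\delta) + (\lambda_k, y^\delta-y) + (\lambda_k, (K-\Kk)x^\dagger)
\end{equation*}
isolates three terms. Two of them are immediately bounded by $\|\lambda_k\|\delta$ and $\|\lambda_k\|\sigma_{k+1}\|x^\dagger\|$ via Cauchy--Schwarz, giving the middle term in \eqref{eq:bregman_bound}. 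For the remaining term $\alpha(\lambda_k,\Kk x_{k,\alpha}-y^\delta)$, I would apply Young's inequality in the form $ab\leq \tfrac{1}{2}a^2+\tfrac{1}{2}b^2$ so that $\tfrac{1}{2}\|\Kk x_{k,\alpha}-y^\delta\|^2$ is generated and cancels against the corresponding term on the left, while $\tfrac{\alpha^2}{2}\|\lambda_k\|^2$ is left over, yielding the last term in \eqref{eq:bregman_bound} after dividing through by $\alpha$.

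The only step that requires genuine care, as opposed to routine manipulation, is accounting for the presence of $\Kk$ rather than $K$ in the fidelity term: this is what forces the appearance of $\sigma_{k+1}\|x^\dagger\|$ (twice) in the bound, alongside the usual $\delta$ contributions. Everything else is a direct transcription of the Burger--Osher rate argument, with $\mathcal{R}$ replaced by $\RWk$ and with the subgradient source element $\Kk^*\lambda_k$ provided by the assumption \eqref{eq:sparseSC}. The convexity of $\RWk$ ensures that $d_k\geq 0$, so the resulting estimate is informative and, in particular, implies $d_k\to 0$ under a standard a-priori choice $\alpha=\alpha(\delta,k)$ with $\delta+\sigma_{k+1}\|x^\dagger\|\to 0$ and $\alpha\to 0$ in such a way that $(\delta+\sigma_{k+1}\|x^\dagger\|)^2/\alpha\to 0$.
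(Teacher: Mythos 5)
Your proposal is correct and follows essentially the same route as the paper's proof: the same comparison with the test element $x^\dagger$, the same splitting of $(\lambda_k,\Kk x_{k,\alpha}-\Kk x^\dagger)$, and the same bounds via \eqref{eq:noisebound} and $\|(K-\Kk)x^\dagger\|\leq\sigma_{k+1}\|x^\dagger\|$. The only cosmetic difference is that you invoke Young's inequality on $\alpha(\lambda_k,\Kk x_{k,\alpha}-y^\delta)$ where the paper adds $\tfrac{1}{2}\alpha^2\|\lambda_k\|^2$ to both sides and completes the square; these are the same estimate.
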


\begin{proof}
    Since $x_{k,\alpha}$ is a minimizer of \eqref{eq:convS}, we \rem{have} 
    \begin{eqnarray*}
        \frac{1}{2}\left\|\Kk x_{k,\alpha} - y^\delta\right\|^2 + \alpha \RWk(x_{k,\alpha}) &\leq& 
        \frac{1}{2}\left\|\Kk x^\dagger - y^\delta\right\|^2 + \alpha \RWk(x^\dagger) \\ &\leq&
        \frac{1}{2}\left\|(\Kk-K) x^\dagger + K x^\dagger - y^\delta\right\|^2 + \alpha \RWk(x^\dagger) \\ &\leq&
        \frac{1}{2}(\sigma_{k+1}\|x^\dagger\| + \delta)^2 + \alpha \RWk(x^\dagger), 
    \end{eqnarray*}
    where we have used \eqref{eq:SVD}, \eqref{eq:tfwd}, \eqref{eq:exact_solution}, the triangle inequality, \eqref{eq:noisebound} and that the singular values $\{ \sigma_j \}$ are sorted non-increasingly. 
    Subtracting $\alpha\RWk(x^\dagger)$ from both sides \rem{we obtain}  
    \begin{equation*}
        \frac{1}{2}\left\|\Kk x_{k,\alpha} - y^\delta\right\|^2 + \alpha d_k + \alpha\left(\partial\RWk(x^\dagger), x_{k,\alpha} - x^\dagger \right) \leq \frac{1}{2}(\sigma_{k+1}\|x^\dagger\| + \delta)^2,
    \end{equation*}
    where $d_k$ is the Bregman distance
    \begin{align*}
        d_k &= \RWk(x_{k,\alpha}) - \RWk(x^\dagger) - \left(\partial\RWk(x^\dagger), x_{k,\alpha} - x^\dagger \right) \\
        &= \RWk(x_{k,\alpha}) - \RWk(x^\dagger) - \left(\Kk^*\lambda_k, x_{k,\alpha} - x^\dagger \right). 
    \end{align*}
    Here, the last equality follows from \eqref{eq:sparseSC}. 
    From \eqref{eq:exact_solution} we have
    \begin{eqnarray*}
        (\Kk^*\lambda_k, x_{k,\alpha} - x^\dagger) &=&
        (\lambda_k, \Kk x_{k,\alpha} - \Kk x^\dagger) \\ &=&
        (\lambda_k, \Kk x_{k,\alpha} - y^\delta + y^\delta - y + K x^\dagger - \Kk x^\dagger) \\ &=&
        (\lambda_k, \Kk x_{k,\alpha} - y^\delta) + (\lambda_k,y^\delta - y + K x^\dagger - \Kk x^\dagger).
    \end{eqnarray*}
    Since 
    \begin{equation*}
        |(\lambda_k,y^\delta - y + K x^\dagger - \Kk x^\dagger)| \leq  \|\lambda_k\|(\delta + \sigma_{k+1}\|x^\dagger\|), 
    \end{equation*}
    we obtain the bound
    \begin{eqnarray*}
        \frac{1}{2}\left\|\Kk x_{k,\alpha} - y^\delta\right\|^2 &+& \alpha d_k + \alpha (\lambda_k, \Kk x_{k,\alpha} - y^\delta) \\
        &\leq& \frac{1}{2}(\sigma_{k+1}\|x^\dagger\| + \delta)^2 + \alpha\|\lambda_k\|(\delta + \sigma_{k+1}\|x^\dagger\|).
    \end{eqnarray*}
    Finally, by adding $\frac{1}{2}\alpha^2\|\lambda_k\|^2$ to both sides of the inequality and noticing that this completes a square on the left-hand side we get 
    \begin{eqnarray*}
      \left\|\Kk x_{k,\alpha} - y^\delta + \alpha\lambda_k\right\|^2 &+& 2\alpha d_k \\ &\leq& (\sigma_{k+1}\|x^\dagger\| + \delta)^2 + 2\alpha\|\lambda_k\|(\delta + \sigma_{k+1}\|x^\dagger\|) + \alpha^2\|\lambda_k\|^2, 
    \end{eqnarray*}
    which, because the first term on the left-hand-side is non-negative, yields the desired bound for the Bregman distance $d_k$. 
\end{proof}


\rem{The authors of \cite{Burger_2013} show that some variational inequalities can provide convergence rates also for sources that are not completely sparse (but with a fast-decaying nonzero part). However, these results rely on a specific choice of a subgradient. More specifically, for all one-sparse solutions, this subgradient must satisfy a very strong source condition. In contrast, our next result} shows that \eqref{eq:sparseSC} always holds when the true source consists of a single basis function.
\begin{proposition} \label{prop:uniform_bound}
    Assume that $x^\dagger = \phi_j$ and that the basis function $\phi_j$ satisfies $\|\Pk\phi_j\| \geq \tau$. Then there exists $\lambda_k$ such that \eqref{eq:sparseSC} holds. Furthermore, if 
    \begin{equation*}
        \sum_{i=1}^\infty \frac{(v_i, \phi_j)^2}{\sigma_i^2} < \infty, 
    \end{equation*}
    then $\| \lambda_k \|$ is uniformly bounded wrt.\ k. In this case, the Bregman distance $d_k$ in \eqref{eq:bregman_bound} reads 
    \begin{equation*}
        d_k = \RWk(x_{k,\alpha}) - \RWk(\phi_j) - \wkj^{-1} \left(\Pk \phi_j, x_{k,\alpha} - \phi_j \right).
    \end{equation*}
\end{proposition}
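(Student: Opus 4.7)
\bigskip

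\noindent\textbf{Proof proposal.} The plan is to produce an \emph{explicit} element $v\in\partial\RWk(\phi_j)$ that lies in $\Ran(\Kk^*)$, then read off $\lambda_k$ from the SVD, bound its norm, and finally substitute into the Bregman distance formula \eqref{eq:bregman}.

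First I would describe $\partial\RWk(\phi_j)$. Using \eqref{eq:rho}, every subgradient at $\phi_j$ has the form $\sum_i \wki \rho_i \phi_i$ with $\rho_j=1$ and $\rho_i\in[-1,1]$ for $i\neq j$. My candidate is
\begin{equation*}
    v := \wkj^{-1} \Pk \phi_j.
\end{equation*}
To check $v\in\partial\RWk(\phi_j)$, compute its coefficient in the $\{\phi_i\}$-basis using $\Pk^*=\Pk$:
\begin{equation*}
    (v,\phi_i) = \wkj^{-1}(\Pk\phi_j,\Pk\phi_i) = \wki \rho_i, \qquad \rho_i := \wki^{-1}\wkj^{-1}(\Pk\phi_j,\Pk\phi_i).
\end{equation*}
The hypothesis $\|\Pk\phi_j\|\geq\tau$ gives $\wkj=\|\Pk\phi_j\|$, hence $\rho_j=\wkj^{-2}\|\Pk\phi_j\|^2=1$. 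For $i\neq j$, Cauchy--Schwarz and the definition \eqref{A6.01} yield $|\rho_i|\leq \wki^{-1}\wkj^{-1}\|\Pk\phi_j\|\|\Pk\phi_i\|\leq 1$. This is essentially the same estimate used in the proof of Lemma~\ref{prop:maximum}.

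Next I would exhibit $\lambda_k$. Since $\Pk\phi_j\in\mathrm{span}\{v_1,\dots,v_k\}=\Ran(\Kk^*)$, we expand $\Pk\phi_j=\sum_{l=1}^k(\phi_j,v_l)v_l$, and using $\Kk^* u_l=\sigma_l v_l$ for $l\leq k$, define
\begin{equation*}
    \lambda_k := \wkj^{-1}\sum_{l=1}^k \sigma_l^{-1}(\phi_j,v_l)\, u_l,
\end{equation*}
which satisfies $\Kk^*\lambda_k = v \in \partial\RWk(\phi_j)$, establishing the source condition \eqref{eq:sparseSC}. Orthonormality of $\{u_l\}$ then gives
\begin{equation*}
    \|\lambda_k\|^2 = \wkj^{-2}\sum_{l=1}^k \frac{(\phi_j,v_l)^2}{\sigma_l^2} \leq \tau^{-2}\sum_{l=1}^\infty \frac{(\phi_j,v_l)^2}{\sigma_l^2},
\end{equation*}
where I used $\wkj\geq\tau$. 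Under the assumed summability the right-hand side is finite and independent of $k$, giving the uniform bound. The stated Bregman distance formula follows by substituting $\Kk^*\lambda_k=\wkj^{-1}\Pk\phi_j$ and $x^\dagger=\phi_j$ into \eqref{eq:bregman}.

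The only mildly nontrivial step is guessing the correct subgradient element; the rest is routine. The choice $v=\wkj^{-1}\Pk\phi_j$ is natural because Lemma~\ref{prop:maximum} already identified $\wkj^{-1}\Pk\phi_j$ as the vector whose $W_k$-rescaled coefficient attains its maximum $1$ precisely at index $j$, i.e., exactly the pattern required for a subgradient of the weighted $\ell_1$-functional at $\phi_j$. Everything else consists of the SVD expansion of $\Pk\phi_j$ and the lower bound $\wkj\geq\tau$.
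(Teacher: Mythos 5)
Your proposal is correct and follows essentially the same route as the paper: the same subgradient element $\wkj^{-1}\Pk\phi_j$, the same $\lambda_k$ (the paper writes it as $\wkj^{-1}(\Kk\Kk^*)^\dagger\Kk\phi_j$, which is exactly your SVD expression), and the same norm identity. The only cosmetic difference is in the uniform bound, where you use $\wkj\geq\tau$ while the paper notes $\wkj\to 1$ as $k\to\infty$; both work, and yours is arguably the more direct estimate.
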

Before we prove this proposition, we briefly mention that, if $\| \lambda_k \|$ is uniformly bounded, then the bound for $d_k$ in Proposition \ref{thm:convSparse} holds in the limit $k \rightarrow \infty$ and is of order $\mathcal{O}(\delta)$ for $\alpha \sim \delta$.
\begin{proof}
    First, we \rem{have}
    \begin{equation} \label{eq:subdifferentialFormula}
        \partial\RWk(\phi_j) = \sum_i \wki \rho_i(\phi_j) \phi_i,
    \end{equation}
    where $\rho_i$ is defined in \eqref{eq:rho}.
    
    Next, recall that Lemma \ref{prop:maximum} guarantees that $\Wk^{-1} \Kdk \Kk \phi_j$ has its maximal component occurring for index $j$. More precisely, \begin{equation*} 
    \Wk^{-1} \Kdk \Kk \phi_j = \wkj \sum_i \left(\frac{\Pk\phi_j}{\wkj}, \frac{\Pk\phi_i}{\wki}\right) \phi_i, 
    \end{equation*}
    see \eqref{eq:maximumIndexExpansion}. We can write this equality as, cf. the definition \eqref{eq:W} and \eqref{A6.01} of $\Wk$,
    \begin{equation*}
        \Kdk \Kk \phi_j = \wkj \sum_i \wki \left(\frac{\Pk\phi_j}{\wkj}, \frac{\Pk\phi_i}{\wki}\right) \phi_i.
    \end{equation*}
    Since $\Kk^\dagger = \Kk^*(\Kk\Kk^*)^\dagger$, it follows that
    \begin{equation} \label{eq:lambda_k_works}
        \Kk^*(\wkj^{-1}(\Kk\Kk^*)^\dagger\Kk\phi_j)
        = \wkj^{-1}\Kk^\dagger\Kk\phi_j 
        = \sum_i \wki \left(\frac{\Pk\phi_j}{\wkj}, \frac{\Pk\phi_i}{\wki}\right) \phi_i.
    \end{equation}
    Hence, by choosing 
    \begin{equation} \label{eq:formula_lambda_k}
    \lambda_{k,j} = \wkj^{-1}(\Kk\Kk^*)^\dagger\Kk\phi_j,
    \end{equation} 
    invoking the assumption that $\|\Pk\phi_j\| \geq \tau$, and recalling the definition \eqref{A6.01} of $\{ \wki \}$, we conclude from \eqref{eq:lambda_k_works} and \eqref{eq:subdifferentialFormula} that \eqref{eq:sparseSC} is satisfied.

    To prove the uniform boundedness of $\| \lambda_k \|$ wrt. $k$, we employ standard formulas for singular systems and \eqref{eq:formula_lambda_k} to obtain
    \begin{equation*}
        \|\lambda_{k,j}\|^2 = \wkj^{-2}\sum_{i=1}^k \frac{(v_i,\phi_j)^2}{\sigma_i^2}.
    \end{equation*}
    Recall that $\Pk$ is the orthogonal projection onto $\nullspace{\Kk}^\perp = \textnormal{span} \{v_1,v_2,\ldots,v_k \}$ and that $\{ v_i \}$ is an orthonormal basis for $X$. Since $\wkj=\max \{ \| \Pk \phi_j \|, \tau \}$, it follows, for any fixed $j$, that $\wkj \rightarrow 1$ as $k \rightarrow \infty$ due to the injectivity assumption on $K$. This completes the argument for the uniform boundedness of $\| \lambda_k \|$. 
\end{proof}

\section{Numerical experiments}\label{sec:numerical}
We will present numerical results for three model problems: 
\begin{description}
    \item[I)] The inverse heat conduction problem. 
    \item[II)] A Cauchy problem for Laplace's equation. 
    \item[III)] The linearized EIT problem, using experimental data.  
\end{description}
It turns out that the effect of the weighting procedure is rather moderate for I), but has a significant impact on the results obtained for II) and III).

We represent the finite dimensional approximation of \eqref{eq:orig} in terms of a matrix $\AAA$ and vectors $\xx$ and $\bb$: 
\begin{equation} \label{eq:matrix-vector_version}
    \AAA \xx = \bb. 
\end{equation}
In the problems I) and II) we added noise to $\bb$,  
\begin{equation*}
    \bb + \beta \mathbf{\Xi},  
\end{equation*}
where $\beta$ is a scalar and the vector $\Xi$ contained normally distributed numbers
with zero mean and standard deviation $1$. The ratio 
\begin{equation*}
   \frac{\beta}{\max{\bb} - \min{\bb}} 
\end{equation*}
was used to specify the noise level in the simulations, i.e., the magnitude of the noise was measured in terms of the size of the standard deviation of the involved probability distributions relative to the range of the data $\bb$. We employed Morozov's discrepancy principle to determine the size of the regularization parameter $\alpha$.  

The size of the truncation parameter $k$ was selected according to the criterion \eqref{eq:select_k} for problems I) and II). This was possible because problems I) and II) only involve simulated data, and hence the noise vector $\beta \mathbf{\Xi}$ was known. For problem III), however, the magnitudes of $k$ and $\alpha$ were determined by a manual inspection of the outcome of the computations. That is, the number of singular values to include, as well as the size of the regularization parameter, was based on a visual judgement, choosing the results that yielded the best "appearance". It is a challenging problem to develop good methods for mimicking \eqref{eq:select_k} when  precise information about the noise vector is not available, see, e.g., \cite{hansen2010discrete} for further details.  

In the numerical tests we will focus on sparsity regularization. Let $\AAA,$ $\WW,$ and $b$ be finite dimensional counterparts of $\Kk,$ $\Wk,$ and $y,$  respectively. Then the methods discussed in Section \ref{seq:weighted-sparsity}, cf. \eqref{A1}, \eqref{A6} and \eqref{A9}, read: 
\begin{align}
    \label{method:standard_sparsity}
   &\min_{\xx} \left\{ \| \AAA \xx - \bb \|_2^2 + \alpha \| \xx \|_1 \right\}, \\
   \label{method:weighted_sparsity_modified_fidelity}
   &\min_{\xx} \left\{ \| \AAA^\dagger \AAA \xx - \AAA^\dagger \bb \|_2^2 + \alpha \| \WW \xx \|_1 \right\}, \\
   \label{method:weighted_sparsity_standard_fidelity}
   &\min_{\xx} \left\{ \| \AAA \xx - \bb \|_2^2 + \alpha \| \WW \xx \|_1 \right\}. 
\end{align}
For problems I) and II) we compare all three approaches, but for problem III) we only consider \eqref{method:standard_sparsity} and \eqref{method:weighted_sparsity_standard_fidelity} due to the lack of precise noise information.

\subsection{The inverse heat conduction problem} 
Consider the following initial/boundary-value problem
\begin{equation} \label{D1}
\begin{split}
    &u_t=u_{xx}, \quad x \in (0,\pi), \, t>0, \\
    &u(0,t)=u(\pi,0)=0, \quad t>0, \\
    &u(x,0)=u_0(x), \quad x \in (0, \pi), 
\end{split}
\end{equation}
and its associated forward operator 
\begin{equation*}
    K:u_0 \mapsto u(\cdot,T)|_{(0, \pi/4)}, 
\end{equation*}
where $u_0 \in L^2(0,\pi)$ and $T=0.5$. Note that "data" only is recorded in the subdomain $(0,\pi/4)$ of the entire spatial domain $(0,\pi)$. Hence, the spaces $X$ and $Y$, used in the discussion of \eqref{eq:orig}, are in this example 
\begin{align*}
    X &= L^2(0,\pi), \\
    Y &= L^2(0,\pi/4). 
\end{align*}
Using the technique of separation of variables, the action of $K$ becomes transparent: 
\begin{equation} \label{D2}
    K: \sum_m a_m \sin(m x) \mapsto \left. \sum_m a_m e^{-m^2 T} \sin(m x) \right|_{(0,\pi/4)}. 
\end{equation}

Since the solution $u(x,t)$ of \eqref{D1} is analytic wrt.\ $x$ for any $t>0$, see, e.g., \cite{evans2022partial} and references therein, it follows from the Identity Theorem for analytic functions \cite{krantz2002primer} 
that $K$ is injective. This is the case even though $K$ is a composition of a forward heat conduction operator and a restriction to a subdomain mapping. 

On the other hand, unless $T$ is very close to zero, $e^{-m^2 T} \approx 0$ for $m \geq 4$ because $e^{-4^2} \approx 1.13 \cdot 10^{-7}$. Hence, typically $K (\sin(mx)) \approx 0$ for $m \geq 4$, and the fictitious null space of $K$ will be significant, even with moderate levels of noise in the data. 

We discretized $K$ in the following way: The interval $[\pi/n, \pi-\pi/n]$ was divided into $n=20$ subintervals $I_1, \, I_2, \ldots, \, I_n$ of equal length. The characteristic functions $\phi_1=\chi_{I_1}, \,  \phi_2=\chi_{I_2}, \ldots, \phi_n = \chi_{I_n}$ of these subintervals were used as basis functions yielding a finite dimensional space for the discrete initial condition: 
\begin{equation} \label{eq:K_h_inverse_heat}
    K_h: \sum_{j=1}^n a_j \chi_{I_j} \mapsto u_M(\cdot,T)|_{(0, \pi/4)},  
\end{equation}
where $M=3000$ represents the number of sine modes employed in the truncated sine series approximation of $u(\cdot,T)|_{(0, \pi/4)}$. Note that we only allow the discrete counterpart to $u_0$ to be nonzero in the subinterval $[\pi/n, \pi-\pi/n]$ of $(0, \pi)$ in order to avoid an inconsistency between the initial condition and the homogeneous Dirichlet boundary condition employed in \eqref{D1}. 

The matrix $\AAA$ associated with $K_h$ maps the coefficients $a_1, \, a_2, \, \ldots, \, a_n$ into the function values $u_M(z_1), \, u_M(z_2), \, \ldots, \, u_M(z_n)$, where $z_1, \, , z_2, \, \ldots, \, z_n$ are $n$ equidistant points in the observation interval $(0, \pi/4)$. The action of $K_h$ was implemented in Matlab, using the FFT algorithm to compute the Fourier coefficients to be employed in the truncated version of \eqref{D2}. Furthermore, the weights $w_{k,1}, w_{k,2}, \ldots, w_{k,n}$ associated with the basis functions $\phi_1=\chi_{I_1}, \, \phi_2=\chi_{I_1}, \ldots, \phi_n=\chi_{I_n}$, see \eqref{A6.01}, were stored in a diagonal matrix $\WW$.  

Figure \ref{fig:ex1_weights} contains illustrations of these weights for two different levels of noise. We observe that the weights associated with the observation region $(0,\pi/4) \approx (0,0.785)$ become larger than the weights for the region $(2.5,\pi)$ when the amount of noise increases. This is due to the fact that the size of the fictitious null space increases as the noise increases and the bias caused by the use of a limited observation region. If observations are made throughout the entire interval $(0,\pi)$, then the weights become similar to those shown in Figure \ref{fig:ex1_weights} (a), for both choices of the noise level (illustrations not included).  
\begin{figure}[H]
    \centering
    \begin{subfigure}[b]{0.49\linewidth}        
        \centering
        \includegraphics[width=\linewidth]{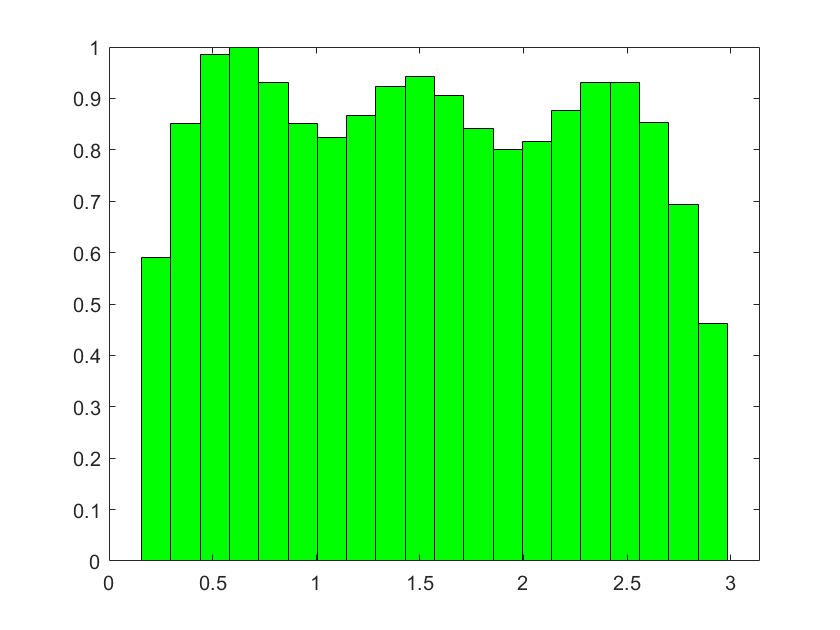}
        \caption{$1 \permil$ noise and $k=3$.}
    \end{subfigure}
    \begin{subfigure}[b]{0.49\linewidth}        
        \centering
        \includegraphics[width=\linewidth]{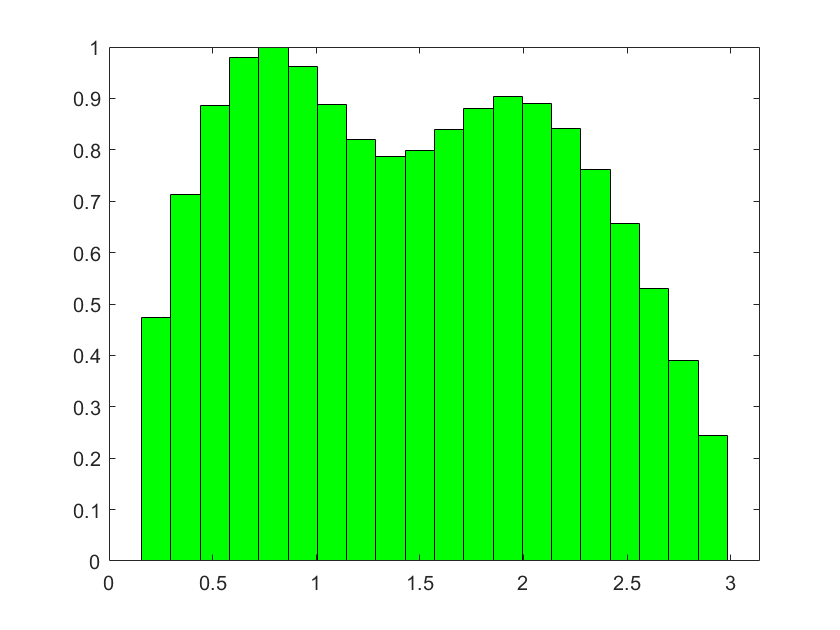}
        \caption{$1 \%$ noise and $k=2$.}
    \end{subfigure}\par
    \caption{The weights \eqref{A6.01} associated with the basis functions for two levels of noise. The criterion \eqref{eq:select_k} was used to determine the size of the truncation parameter $k$.}  
    \label{fig:ex1_weights}
\end{figure}

Figure \ref{fig:ex1_0_001_noise} shows that all the three methods \eqref{method:standard_sparsity}, \eqref{method:weighted_sparsity_modified_fidelity} and \eqref{method:weighted_sparsity_standard_fidelity} yield adequate results in the case of $1 \permil$ noise and when the support of the true initial condition is outside the region $(0,\pi/4)$ where the observations are made. On the other hand, with $1 \%$  noise, see Figure \ref{fig:ex1_0_01_noise}, standard sparsity regularization, panel (b), does not produce inverse solutions of the same quality as the weighted versions, panels (c) and (d), of the methodology. This was also observed with $5 \%$ noise (illustrations not included). As mentioned earlier, we do not have a good theoretical understanding of why there are significant differences, when $\alpha > 0$, between the solutions of \eqref{method:weighted_sparsity_modified_fidelity} and \eqref{method:weighted_sparsity_standard_fidelity}, cf. panels (c) and (d) in Figure \ref{fig:ex1_0_01_noise}. This is an open problem.  

The true initial condition depicted in panel (a) in figures \ref{fig:ex1_0_001_noise} and \ref{fig:ex1_0_01_noise} is not in the span of the basis functions employed to discretize the operator $K$ in \eqref{D2}, see also \eqref{eq:K_h_inverse_heat}. Hence, we did not commit an inverse crime. 

\begin{figure}[H]
    \centering
    \begin{subfigure}[b]{0.49\linewidth}        
        \centering
        \includegraphics[width=\linewidth]{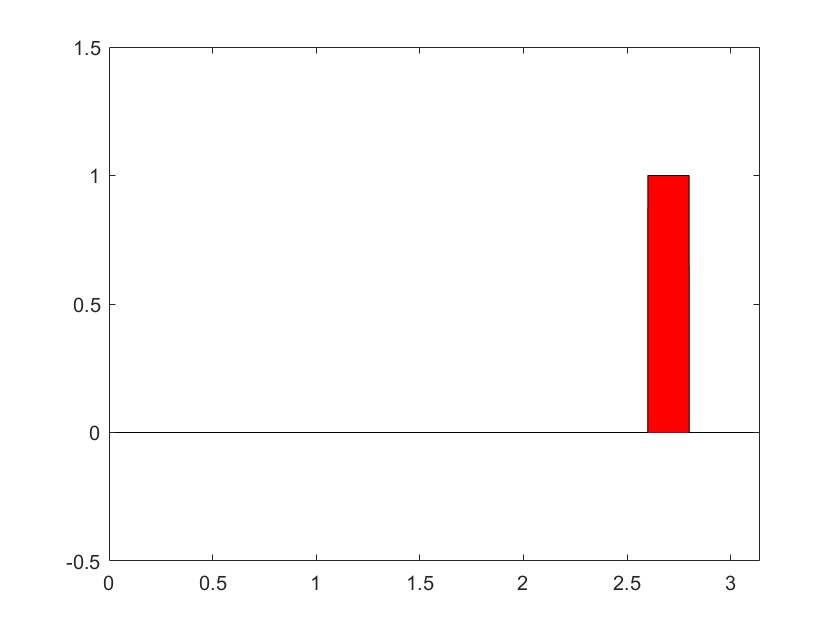}
        \caption{True initial condition.}
    \end{subfigure}
    \begin{subfigure}[b]{0.49\linewidth}        
        \centering
        \includegraphics[width=\linewidth]{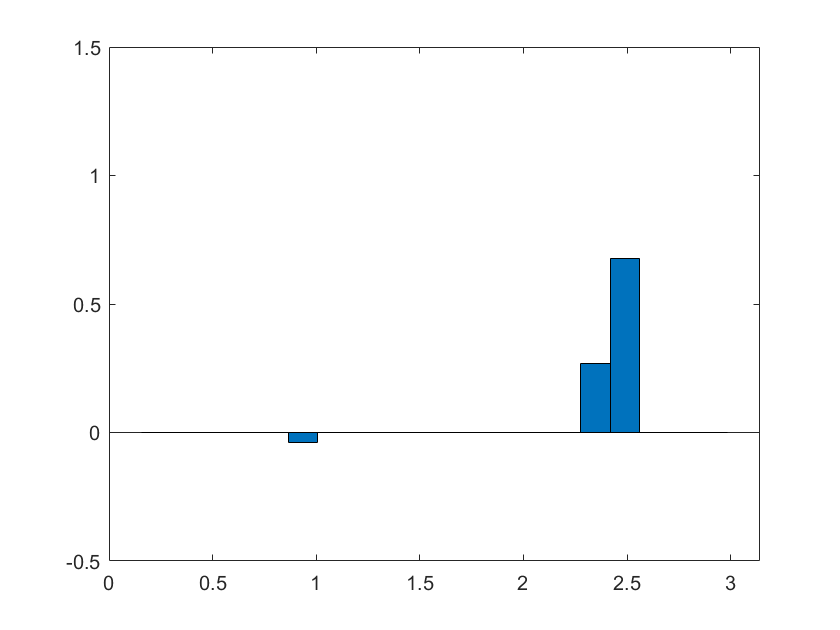}
        \caption{$\min_{\xx} \left\{ \| \AAA \xx - \bb \|_2^2 + \alpha \| \xx \|_1 \right\}$.}
    \end{subfigure}\par
    \begin{subfigure}[b]{0.49\linewidth}        
        \centering
        \includegraphics[width=\linewidth]{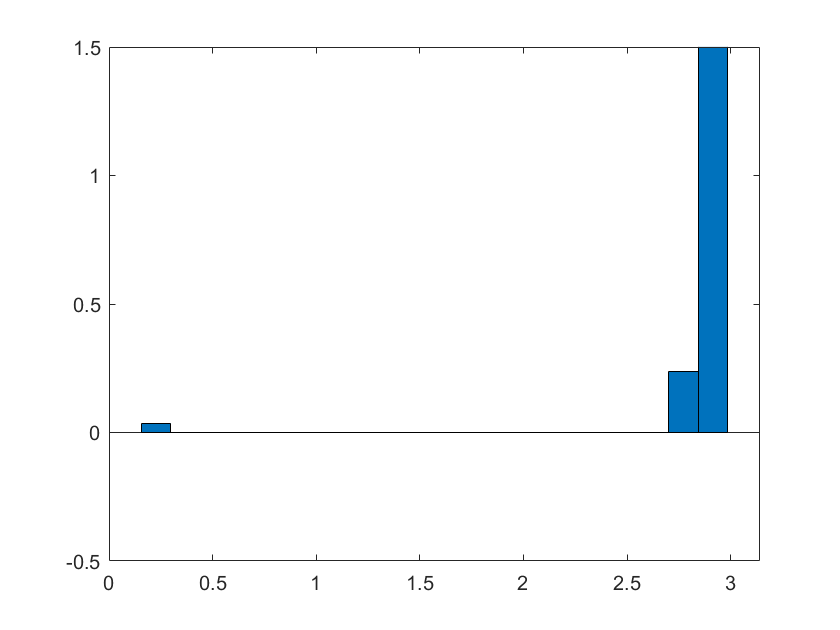}
        \caption{$\min_{\xx} \left\{ \| \AAA^\dagger \AAA \xx - \AAA^\dagger \bb \|_2^2 + \alpha \| \WW \xx \|_1 \right\}$.}
    \end{subfigure}
    \begin{subfigure}[b]{0.49\linewidth}        
        \centering
        \includegraphics[width=\linewidth]{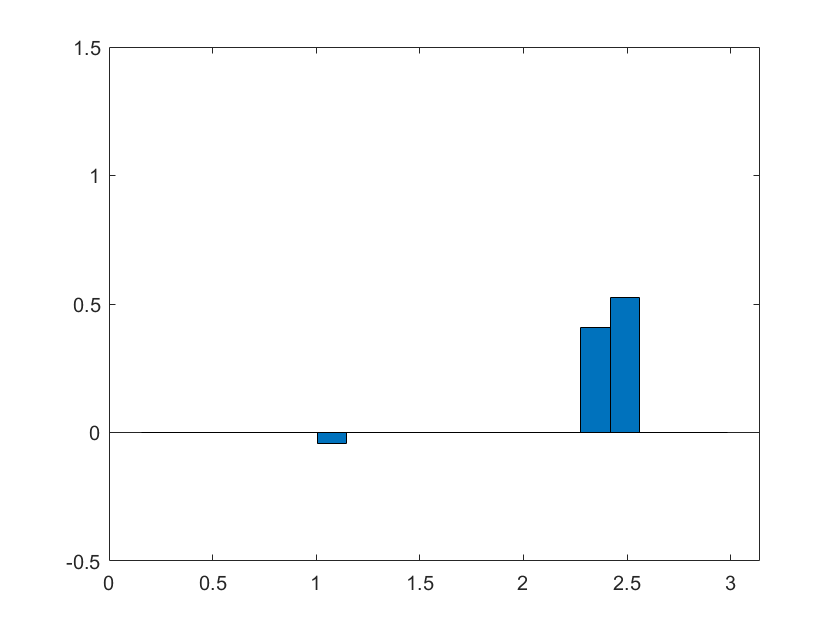}
        \caption{$\min_{\xx} \left\{ \| \AAA \xx - \bb \|_2^2 + \alpha \| \WW \xx \|_1 \right\}$.}
    \end{subfigure}
    \caption{Inverse heat conduction example: $1 \permil$ noise and Morozov's discrepancy principle.}
    \label{fig:ex1_0_001_noise}
\end{figure}

\begin{figure}[H]
    \centering
    \begin{subfigure}[b]{0.49\linewidth}        
        \centering
        \includegraphics[width=\linewidth]{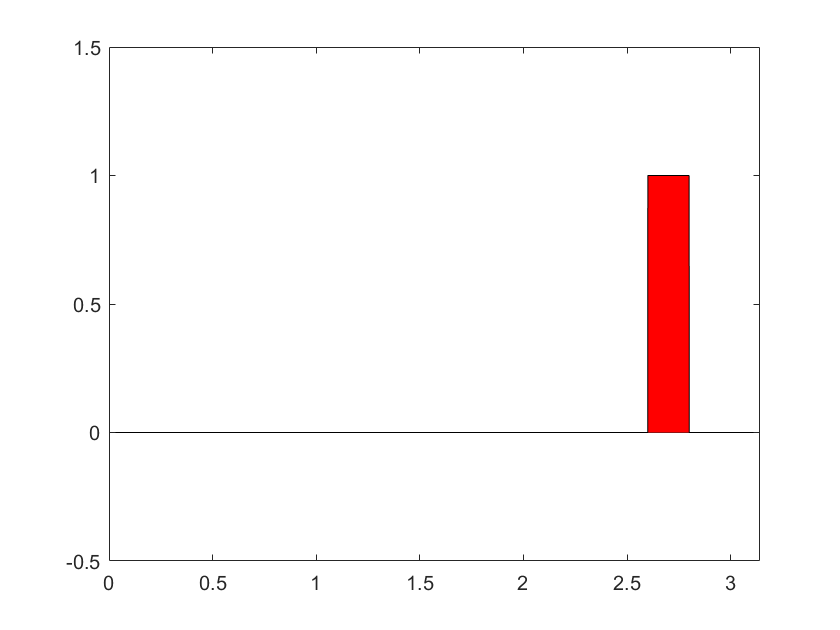}
        \caption{True initial condition.}
    \end{subfigure}
    \begin{subfigure}[b]{0.49\linewidth}        
        \centering
        \includegraphics[width=\linewidth]{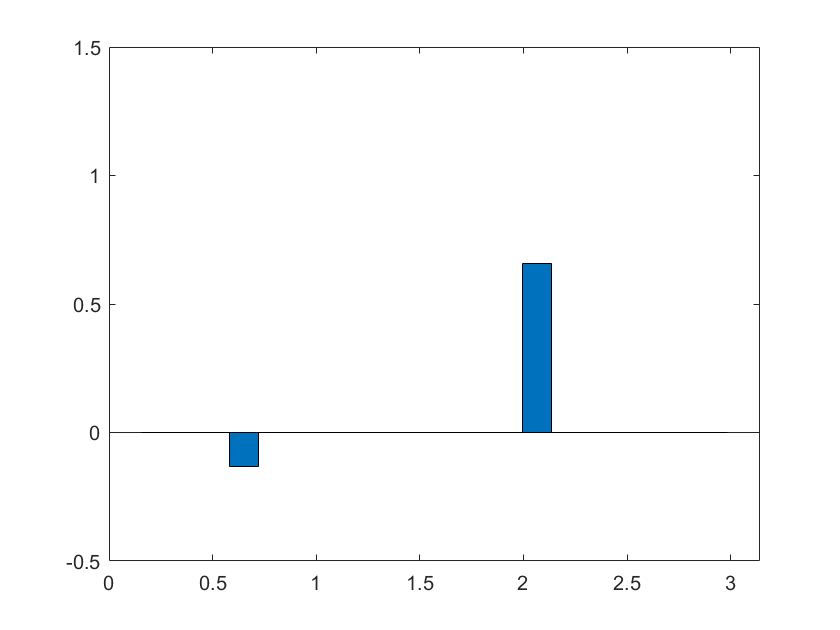}
        \caption{$\min_{\xx} \left\{ \| \AAA \xx - \bb \|_2^2 + \alpha \| \xx \|_1 \right\}$.}
    \end{subfigure}\par
    \begin{subfigure}[b]{0.49\linewidth}        
        \centering
        \includegraphics[width=\linewidth]{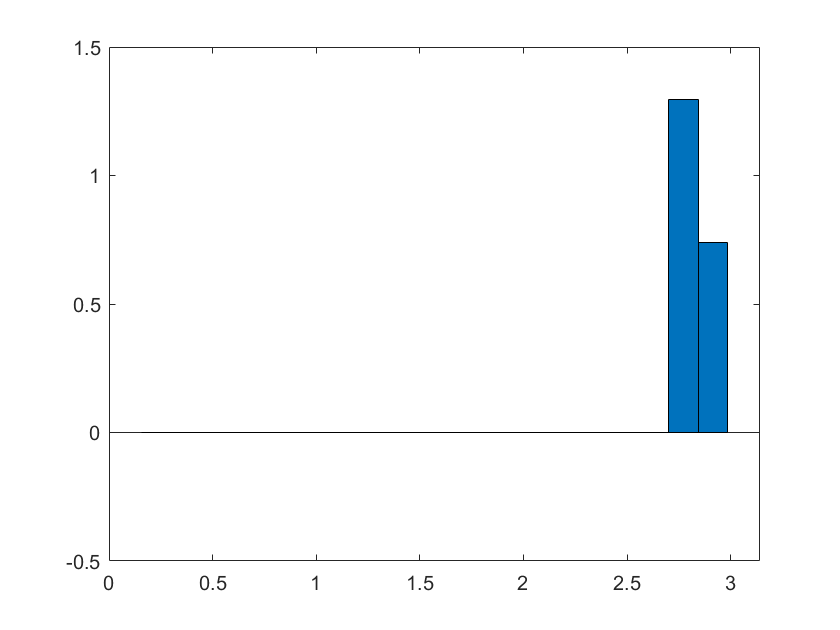}
        \caption{$\min_{\xx} \left\{ \| \AAA^\dagger \AAA \xx - \AAA^\dagger \bb \|_2^2 + \alpha \| \WW \xx \|_1 \right\}$.}
    \end{subfigure}
    \begin{subfigure}[b]{0.49\linewidth}        
        \centering
        \includegraphics[width=\linewidth]{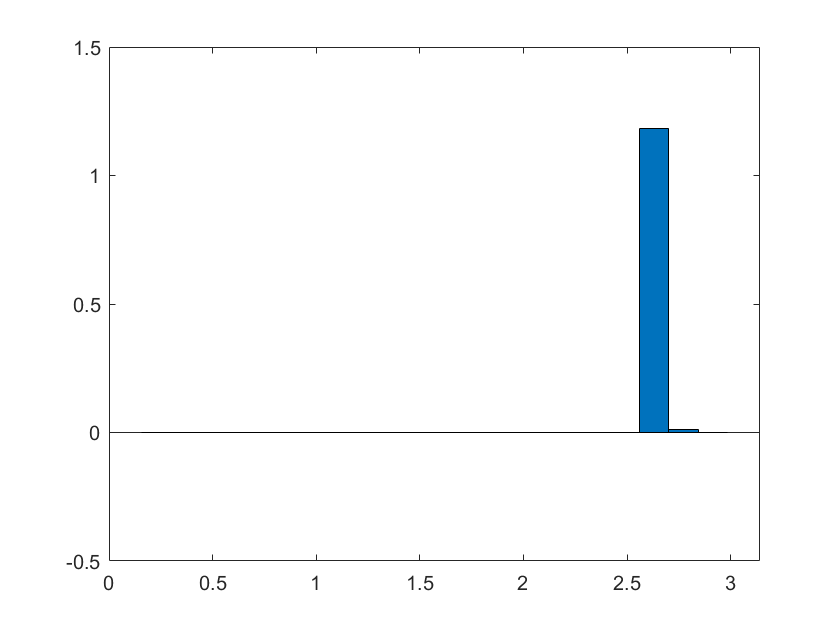}
        \caption{$\min_{\xx} \left\{ \| \AAA \xx - \bb \|_2^2 + \alpha \| \WW \xx \|_1 \right\}$.}
    \end{subfigure}
    \caption{Inverse heat conduction example: $1 \%$ noise and Morozov's discrepancy principle.}
    \label{fig:ex1_0_01_noise}
\end{figure}

\subsection{A Cauchy  problem for Laplace's equation}
Consider the elliptic boundary value problem
\begin{align}\label{eq:exMoon1}
    - \Delta u &= 0, \ \xx \in \Omega, \\ \label{eq:exMoon2}
    u &= f, \ \xx \in \partial\Omegai, \\\label{eq:exMoon3}
    \partial_n u &= 0, \ \xx \in \partial\Omegao,
\end{align}
where $\Omega \subset \mathbb{R}^2$ is an annulus with inner and outer radii $0.5$ and $1$, respectively, see Figure \ref{fig:annulus}. Note that $\partial\Omegai$ and $\partial\Omegao$ represent the inner and outer boundaries of the annulus, and that data for the state $u$ only is available throughout the blue subregion $\Omega_e$. More specifically, the forward operator reads
\begin{equation} \label{eq:forward_Cauchy}
\begin{split}
        Kf: L^2(\partial\Omegai) & \rightarrow L^2(\Omega_e), \\  f &\mapsto u|_{\Omega_e}.
\end{split}
\end{equation}

Since the solution $u$ of \eqref{eq:exMoon1}-\eqref{eq:exMoon3} is harmonic in the interior of $\Omega$, it follows from standard regularity theory that $u$ is analytic and, as in the previous example, we can conclude from the Identity Theorem for analytic functions \cite{krantz2002primer} that $K$ is injective. 
\begin{figure}[H]
    \centering 
    \includegraphics[width=.3\linewidth]{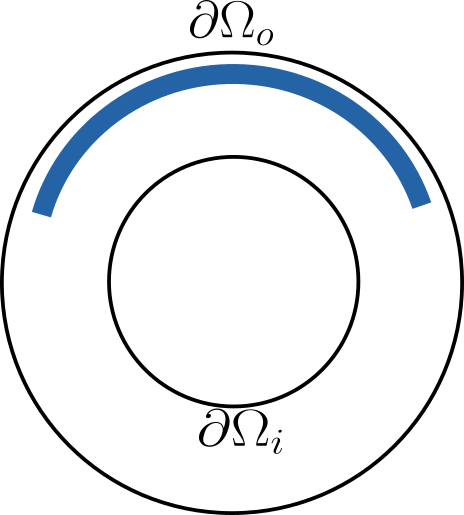}
    \caption{The annulus-shaped domain $\Omega$ for the Cauchy problem. The blue region is the subdomain $\Omega_e$ in which data is recorded.}\label{fig:annulus}
\end{figure}

To run the forward simulations, we discretized $\partial \Omegai$ using $180$ characteristic functions $\chi_j$ associated with equally sized arches, i.e., with arch length $\frac{\pi}{180}$. The forward matrix was thereafter generated by numerically solving \eqref{eq:exMoon1}-\eqref{eq:exMoon3} with $f = \chi_j, j = 1,2,\ldots, 180$. This was done using the FEM software tool FEniCS, where we employed standard piecewise linear Lagrange elements with a mesh for $\Omega$ containing $87,230$ nodes. Finally, the numerical solution $u_h$ was sampled at 120 uniformly distributed points in $\Omega_e$, i.e., the blue region in Figure \ref{fig:annulus}. This procedure resulted in a forward matrix with dimensions $120 \times 180$. 

In the inverse experiments we followed a similar approach, except that $\partial \Omegai$ was discretized using only $120$ characteristic functions, defined in terms of equally sized arches, 
and the FEM simulations were performed on a grid with $64,162$ nodes. This resulted in a quadratic matrix $\AAA$, cf. \eqref{eq:matrix-vector_version}, of size $120 \times 120$. Note that we avoided inverse crimes because different discretizations were used in the forward and inverse computations. 

\begin{figure}[H]
    \centering
    \begin{subfigure}[b]{0.49\linewidth}
        \includegraphics[width=\linewidth]{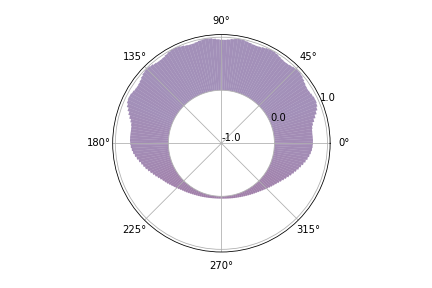}
        \caption{1\% noise}
    \end{subfigure}
    \begin{subfigure}[b]{0.49\linewidth}
        \includegraphics[width=\linewidth]{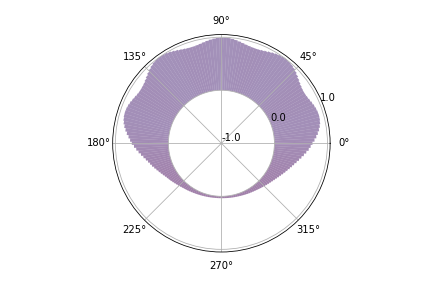}
        \caption{5\% noise}
    \end{subfigure}
        \caption{The weights \eqref{A6.01} associated with the basis functions, employed to discretize the forward operator \eqref{eq:forward_Cauchy}, for two different levels of noise.}
         \label{fig:ex2_weights}
\end{figure}

\begin{figure}[H]
    \centering
    \begin{subfigure}[b]{0.49\linewidth}        
        \centering
        \includegraphics[width=\linewidth]{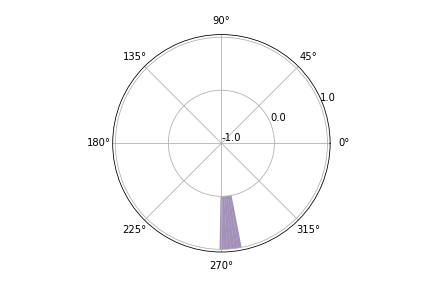}
        \caption{True source.}
    \end{subfigure}
    \begin{subfigure}[b]{0.49\linewidth}        
        \centering
        \includegraphics[width=\linewidth]{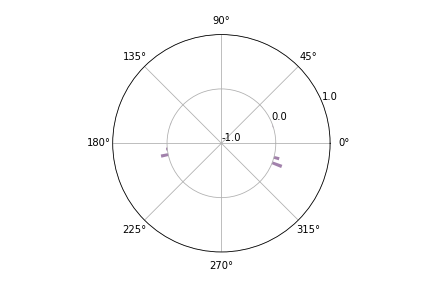}
        \caption{$\min_{\xx} \left\{ \| \AAA \xx - \bb \|_2^2 + \alpha \| \xx \|_1 \right\}$.}
    \end{subfigure}\par
    \begin{subfigure}[b]{0.49\linewidth}        
        \centering
        \includegraphics[width=\linewidth]{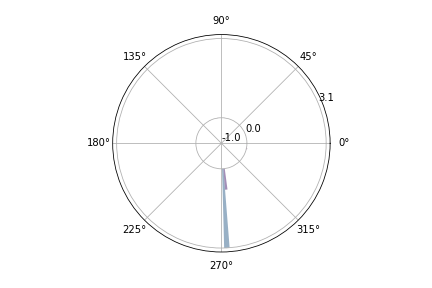}
        \caption{$\min_{\xx} \left\{ \| \AAA^\dagger \AAA \xx - \AAA^\dagger \bb \|_2^2 + \alpha \| \WW \xx \|_1 \right\}$.}
    \end{subfigure}
    \begin{subfigure}[b]{0.49\linewidth}        
        \centering
        \includegraphics[width=\linewidth]{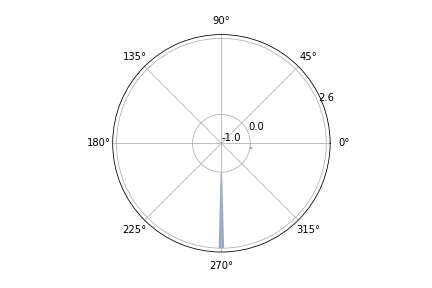}
        \caption{$\min_{\xx} \left\{ \| \AAA \xx - \bb \|_2^2 + \alpha \| \WW \xx \|_1 \right\}$.}
    \end{subfigure}
    \caption{Inverse Cauchy problem: $1 \%$ noise and Morozov's discrepancy principle.}
    \label{fig:ex3_0_01_noise}
\end{figure}

We ran several experiments with true sources located in different subregions of $\partial \Omegai$. For sources located in the upper half-plane, there was no qualitative difference between the regularized solutions with and without weighting, cf. \eqref{method:standard_sparsity}-\eqref{method:weighted_sparsity_standard_fidelity}, and we decided not to present any of these results. All the three methods performed rather well. 

However, when the true source was located far away from where measurements are made, the weighting had a significant impact: In panels (c) and (d) of Figure \ref{fig:ex3_0_01_noise} we observe that the weighted solutions succeeded in recovering the position of the true source, whereas panel (b) reveals that the standard approach did not handle this case very well. 
All figures in this example display polar plots, and the purple bars show the angle/position of the source on the inner boundary of the annulus, cf. Figure \ref{fig:annulus}. The number printed between 0$^\circ$ and 45$^\circ$ in each polar plot represents the magnitude of the source.

Even though the weighted \rem{sparsity} procedures managed to recover the correct position of the source, the magnitude is too large and the spread is too small, see Figure \ref{fig:ex3_0_01_noise}. In Figure \ref{fig:ex3_0_01_noiseINEQ} we illustrate that this might be rectified by introducing an inequality constraint on the control. However, this is only possible if a qualified choice for the inequality bound is available.

\begin{figure}[H]
    \centering
    \begin{subfigure}[b]{0.49\linewidth}        
        \centering
        \includegraphics[width=\linewidth]{moon/posterior/truebc_0.01_noise.png}
        \caption{True source.}
    \end{subfigure}
    \begin{subfigure}[b]{0.49\linewidth}        
        \centering
        \includegraphics[width=\linewidth]{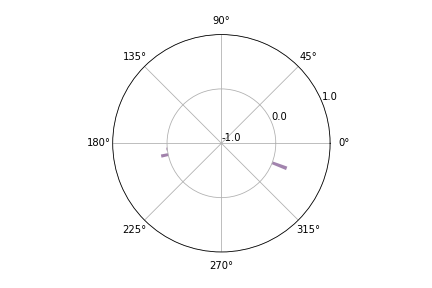}
        \caption{$\min_{0\leq\xx\leq1} \left\{ \| \AAA \xx - \bb \|_2^2 + \alpha \| \xx \|_1 \right\}$.}
    \end{subfigure}\par
    \begin{subfigure}[b]{0.49\linewidth}        
        \centering
        \includegraphics[width=\linewidth]{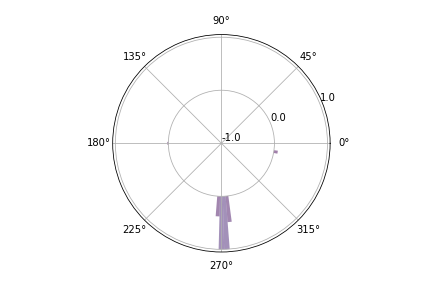}
        \caption{$\min_{0\leq\xx\leq1} \left\{ \| \AAA^\dagger \AAA \xx - \AAA^\dagger \bb \|_2^2 + \alpha \| \WW \xx \|_1 \right\}$.}
    \end{subfigure}
    \begin{subfigure}[b]{0.49\linewidth}        
        \centering
        \includegraphics[width=\linewidth]{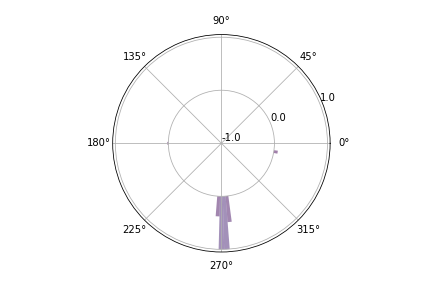}
        \caption{$\min_{0\leq\xx\leq1} \left\{ \| \AAA \xx - \bb \|_2^2 + \alpha \| \WW \xx \|_1 \right\}$.}
    \end{subfigure}
    \caption{Inverse Cauchy problem: $1 \%$ noise and Morozov's discrepancy principle.}
    \label{fig:ex3_0_01_noiseINEQ}
\end{figure}

\begin{figure}[H]
    \centering
    \begin{subfigure}[b]{0.49\linewidth}        
        \centering
        \includegraphics[width=\linewidth]{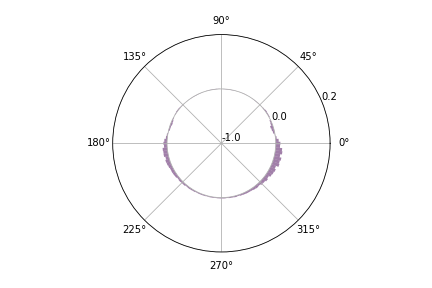}
        \caption{\rem{$\min_{\xx} \left\{ \| \AAA \xx - \bb \|_2^2 + \alpha \| \xx \|_2^2 \right\}$.}}
    \end{subfigure}
    \begin{subfigure}[b]{0.49\linewidth}        
        \centering
        \includegraphics[width=\linewidth]{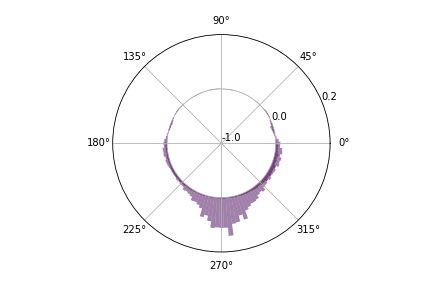}
        \caption{\rem{$\min_{\xx} \left\{ \| \AAA \xx - \bb \|_2^2 + \alpha \| \WW\xx \|_2^2 \right\}$.}}
    \end{subfigure}
    \caption{\rem{Inverse Cauchy problem, employing standard and weighted Tikhonov regularization: $1 \%$ noise and Morozov's discrepancy principle. The true source is shown in Figure \ref{fig:ex3_0_01_noise}(a).}}
    \label{fig:ex3_tikh}
\end{figure}

\rem{In contrast to sparsity recovery, using weighted quadratic Tikhonov regularization produces a solution where the magnitude is too small and the spread is too wide, cf. Figure \ref{fig:ex3_tikh}. Nevertheless, the maximum of the reconstruction shown in panel (b) is correctly located. These computations suggest that a weighted version of the method proposed in \cite{2009_Lu} could produce results of the same sparse quality as those obtained with \eqref{A6}.}

Recall the definitions \eqref{eq:tfwd} and \eqref{eq:TSVD} of the truncated operators $\Kk$ and $\Kk^\dagger$. 
To illustrate why the weights, which are displayed in Figure \ref{fig:ex2_weights}, has a significant impact on the present source recovery problem, we have included plots of the $9$ first  singular vectors $\mathbf{v}_1, \mathbf{v}_2, \ldots ,\mathbf{v}_9$ of $\AAA$. Figure \ref{fig:ex3_svd} shows that $\mathbf{v}_1$ is almost zero in all indices corresponding to nodes located in the lower half-plane of the annulus, and that the magnitudes associated with these positions in $\mathbf{v}_k$ only increases slowly wrt.\ $k$.

\begin{figure}[H]
    \centering
    \begin{subfigure}[b]{0.32\linewidth}        
        \centering
        \includegraphics[width=\linewidth]{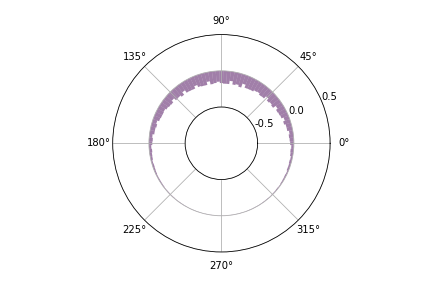}
        \caption{$\mathbf{v}_1$}
    \end{subfigure}
    \begin{subfigure}[b]{0.32\linewidth}        
        \centering
        \includegraphics[width=\linewidth]{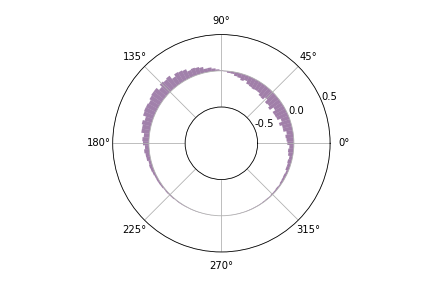}
        \caption{$\mathbf{v}_2$}
    \end{subfigure}
    \begin{subfigure}[b]{0.32\linewidth}        
        \centering
        \includegraphics[width=\linewidth]{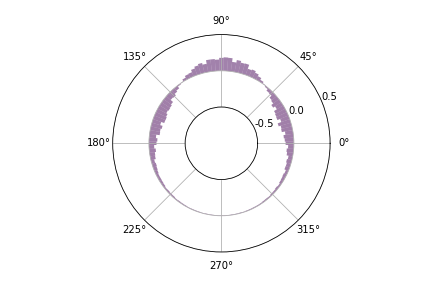}
        \caption{$\mathbf{v}_3$}
    \end{subfigure}\par
    \begin{subfigure}[b]{0.32\linewidth}        
        \centering
        \includegraphics[width=\linewidth]{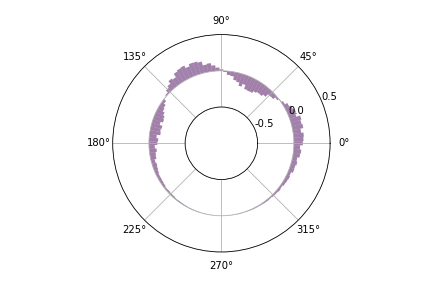}
        \caption{$\mathbf{v}_4$}
    \end{subfigure}
    \begin{subfigure}[b]{0.32\linewidth}        
        \centering
        \includegraphics[width=\linewidth]{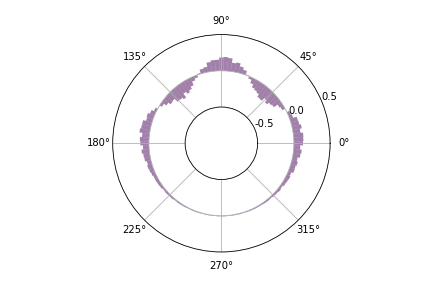}
        \caption{$\mathbf{v}_5$}
    \end{subfigure}
    \begin{subfigure}[b]{0.32\linewidth}        
        \centering
        \includegraphics[width=\linewidth]{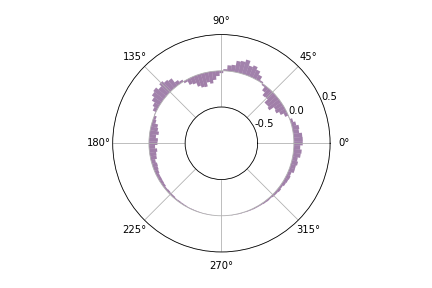}
        \caption{$\mathbf{v}_6$}
    \end{subfigure}\par
        \begin{subfigure}[b]{0.32\linewidth}        
        \centering
        \includegraphics[width=\linewidth]{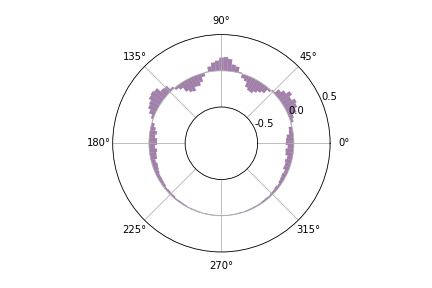}
        \caption{$\mathbf{v}_7$}
    \end{subfigure}
    \begin{subfigure}[b]{0.32\linewidth}        
        \centering
        \includegraphics[width=\linewidth]{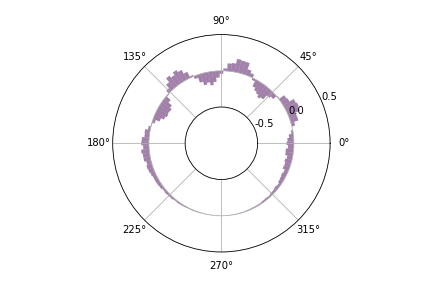}
        \caption{$\mathbf{v}_8$}
    \end{subfigure}
    \begin{subfigure}[b]{0.32\linewidth}        
        \centering
        \includegraphics[width=\linewidth]{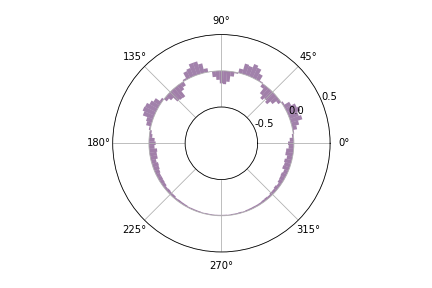}
        \caption{$\mathbf{v}_9$}
    \end{subfigure}\par
    \caption{First nine right singular vectors.}
    \label{fig:ex3_svd}
\end{figure}

\subsection{Electrical Impedance Tomography}
Electrical Impedance Tomography (EIT) is an imaging technology that aims at reconstructing a body's interior electrical conductivity distribution from electrode measurements of currents and voltages on the surface of the body. For the presented case we rely on the code and data for the Kuopio Tomography Challenge 2023 (KTC23) \cite{Mikko}.

The most accurate mathematical model for EIT is the complete electrode model (CEM) \cite{somersalo1992}. The body is modeled by a bounded domain $\Omega \subset \mathbb{R}^2$ with smooth boundary $\partial \Omega$ and conductivity $\sigma.$ For consistency with KTC23, we fix $\Omega = B(0,R),\, R = 11.5$cm. On the boundary of the domain, $M=32$ electrodes of equal size are mounted equidistantly, and through each electrode $E_m$, the input current $I_m$ is controlled giving the input current pattern $I = [I_1,I_2, \ldots, I_M]^T \in \mathbb R_\diamond^M$, where $\mathbb R_\diamond^M$ denotes the subspace of vectors in $\RM$ with components summing to zero. $I$ generates an interior electric potential $u\in H^1(\Omega)$ that together with the boundary voltages $U=(U_1,U_2,\ldots,U_M) \in \mathbb R_\diamond^M,$ constant on each electrode $E_m,$ solves
\begin{align}
    -\nabla \cdot (\sigma \nabla u)&=0, &\mathrm{in}&\quad\Omega, \label{eq:CEM1}\\
    u+z\sigma\frac{\partial u}{\partial \nu}&=U_m, &\mathrm{on}& \quad E_m,\, m= 1,\ldots, M, \\
    \int_{E_m} \nu\cdot(\sigma \nabla u)dS &= I_m, &\mathrm{on}& \quad E_m,\, m= 1,\ldots, M, \\
    \frac{\partial u}{\partial \nu} &=0, & \mathrm{on}& \quad\partial\Omega\setminus \bigcup_{m=1}^M E_m,\label{eq:CEM4}
    \end{align}
where $\nu$ denotes the outward pointing unit normal vector at the boundary $\partial\Omega$. The factor $z$ is the so-called contact impedance. Note that the solution to \eqref{eq:CEM1}-\eqref{eq:CEM4} is the pair $(u,U) \in H^1(\Omega) \oplus R_\diamond^M,$ and that  $U=\in \mathbb R_\diamond^M$  implements a zero sum condition thereby grounding the voltage potential; this gives uniqueness for the problem. \rem{Outside the electrodes $\{ E_m \}$ the homogeneous Neumann boundary condition \eqref{eq:CEM4} is put; this reflects the physical assumption that the object under consideration is electrically insulated. This is in contrast to deblurring tasks, for which appropriate boundary conditions can be chosen in order to promote certain computational and recovery properties \cite{donatelli2004anti,ng1999fast}.}

In EIT several current patterns $\mathbf{I} = \{I^{n}\},\; n = 1,2,\ldots N,$ are injected sequentially. The corresponding boundary voltages are collected in $\mathbf{U}=\{U^{n}\};$ $N$ denotes the number of experiments. In line with \cite{Mikko}, we consider voltage differences between consecutive electrodes and collect them in the matrix $\mathbf{V.}$ That is, the $i$'th element of the $n$'th column of $\mathbf{V}$ is $U^n_{i+1}-U^n_{i}.$

The forward operator is defined by $F \colon \sigma \mapsto F(\sigma)=\mathbf V.$ The inverse problem 
 of EIT then provides measured data $\mathbf{V}_\text{meas}$ and asks for a solution $\sigma$ to 
\begin{align*}
    F(\sigma) = \mathbf{V}_\text{meas};
\end{align*}
this is a non-linear and highly ill-posed problem. 

A first approach is often to linearize $F$ around a constant base conductivity $\sigma_0,$ i.e.,\ 
\begin{align*}
        F(\sigma)-F(\sigma_0) \approx J(\sigma_0) h 
\end{align*}
using  the notation $J(\sigma_0)$ for the derivative (Jacobian) of $F$ and $h = \sigma-\sigma_0$ for the difference. The linearized problem is then given by the linear equation
\begin{align}\label{linEIT}
         J(\sigma_0) h  = \mathbf{V}_\text{meas} - F(\sigma_0).
\end{align} 
In \cite{Mikko} measurements corresponding to an empty tank is provided, and hence $F(\sigma_0)$ can be replaced by those measurements in the above equation. When doing so, systematic modeling errors can be reduced.

In the case of infinite and continuous boundary measurements, the reconstruction problem is known as the Calder\'on problem in recognition of \cite{Calderon1980}. The linearized Calder\'on problem can be cast in $X=L^2(\Omega)$ (under mild conditions) and is uniquely solvable. We will therefore in the sequel employ the approach \eqref{linEIT} in the spirit of the methods developed throughout this paper.

In accordance with the methods presented in \cite{Mikko}, we implement \eqref{linEIT} using Finite Elements with $1602$ basis functions $\phi_j$ thus arriving at a matrix equation \eqref{eq:matrix-vector_version} as the finite dimensional representations of \eqref{linEIT}. Recall that $h$ and - after discretization - $\xx$ represents the conductivity relative to the base/background conductivity. To control the magnitude of this conductivity, we solve \eqref{method:standard_sparsity} and  \eqref{method:weighted_sparsity_standard_fidelity} with box constraints on $\xx$, and obtain the reconstructions seen in Figure~\ref{fig:eit}. We clearly see a vast improvement from the weighting. In Table~\ref{tab:ssim} we display the Structural Similarity Index Measure (SSIM) that was used in the contest to quantify the reconstructions. We note that our score is on par with  competing methods, e.g.,\ from \cite{Amal}.

\begin{table}[H]
\begin{center}
\begin{tabular}{||c | c c||} 
 \hline
 \textbf{Phantom (row)} & \textbf{Unweighted} & \textbf{Weighted} \\ [0.5ex] 
 \hline
 1 & 0.131 & 0.856 \\ 
 \hline
 2 & 0.033 & 0.881 \\
 \hline
 3 & 0.534 & 0.930 \\
 \hline
 4 & 0.504 & 0.850 \\
 \hline
\end{tabular}
\caption{The SSIM scoring of the results shown in Figure \ref{fig:eit}.}
\label{tab:ssim}
\end{center}
\end{table}

\begin{figure}[H]
    \centering
    \begin{subfigure}[b]{0.32\linewidth}        
        \centering 
        \textbf{True inclusions\\ }
    \end{subfigure}
    \begin{subfigure}[b]{0.32\linewidth}        
        \centering 
        {\textbf{Unweighted\\ $\left\{ \| \AAA \xx - \bb \|_2^2 + \alpha \| \xx \|_1 \right\}$.}}
    \end{subfigure}
    \begin{subfigure}[b]{0.32\linewidth}        
        \centering 
        {\textbf{Weighted\\ $\left\{ \| \AAA \xx - \bb \|_2^2 + \alpha \| \WW \xx \|_1 \right\}$.}}
    \end{subfigure}\par
    \begin{subfigure}[b]{0.32\linewidth}        
        \centering 
        \includegraphics[width=\linewidth]{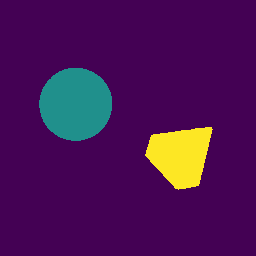}
    \end{subfigure}
    \begin{subfigure}[b]{0.32\linewidth}        
        \centering
        \includegraphics[width=\linewidth]{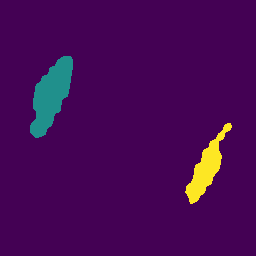}
    \end{subfigure}
    \begin{subfigure}[b]{0.32\linewidth}        
        \centering
        \includegraphics[width=\linewidth]{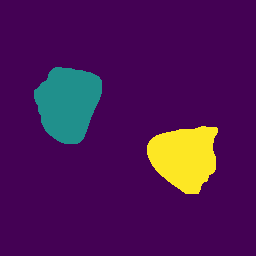}
    \end{subfigure}\par
    \centering
    \begin{subfigure}[b]{0.32\linewidth}        
        \centering
        \includegraphics[width=\linewidth]{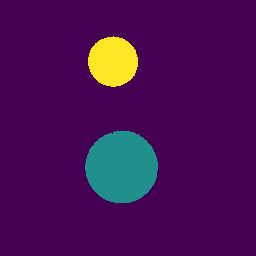}
    \end{subfigure}
    \begin{subfigure}[b]{0.32\linewidth}        
        \centering
        \includegraphics[width=\linewidth]{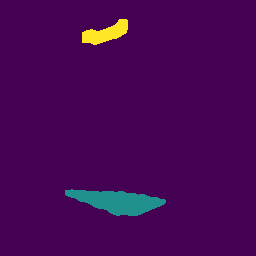}
    \end{subfigure}
    \begin{subfigure}[b]{0.32\linewidth}        
        \centering
        \includegraphics[width=\linewidth]{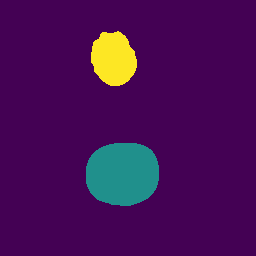}
    \end{subfigure}\par
    \centering
    \begin{subfigure}[b]{0.32\linewidth}        
        \centering
        \includegraphics[width=\linewidth]{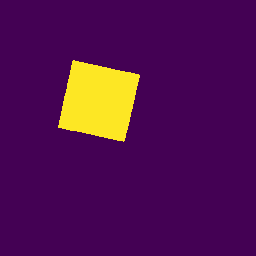}
    \end{subfigure}
    \begin{subfigure}[b]{0.32\linewidth}        
        \centering
        \includegraphics[width=\linewidth]{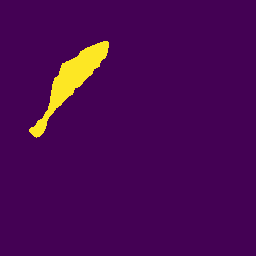}
    \end{subfigure}
    \begin{subfigure}[b]{0.32\linewidth}        
        \centering
        \includegraphics[width=\linewidth]{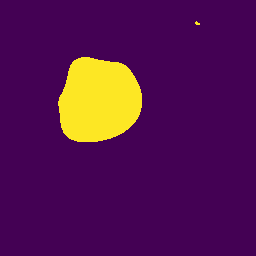}
    \end{subfigure}\par
        \centering
    \begin{subfigure}[b]{0.32\linewidth}        
        \centering
        \includegraphics[width=\linewidth]{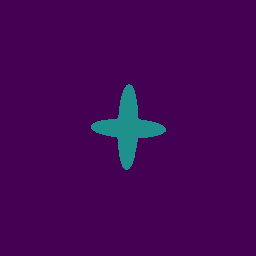}
    \end{subfigure}
    \begin{subfigure}[b]{0.32\linewidth}        
        \centering
        \includegraphics[width=\linewidth]{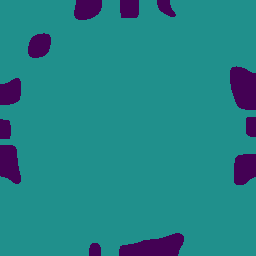}
    \end{subfigure}
    \begin{subfigure}[b]{0.32\linewidth}        
        \centering
        \includegraphics[width=\linewidth]{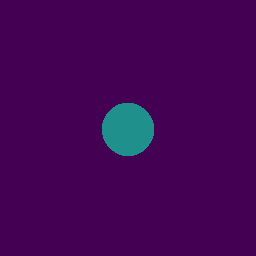}
    \end{subfigure}\par
    \caption{Comparison of the true inclusions and the reconstructions obtained by solving the linearized EIT model, using sparsity regularization and box constraints. The second and third columns contain the results obtained with the unweighted/standard and weighted schemes, respectively.}
    \label{fig:eit}
\end{figure}

\section{Conclusions}
In this work we have demonstrated that, for inverse problems with injective forward operators, the introduction of a fictitious null space and a corresponding weighting can counteract an unwanted bias; we have focused on sparsity regularization, but similar results hold for standard Tikhonov regularization.
Our computational examples document that, for some PDE-driven inverse problems, the strategy is successful, even for a problem arising in EIT with real data.

A few concluding remarks are appropriate: The choice of the fictitious null space inherently depends on the data noise, and the defining truncation parameter $k$ can, as in other regularization methods, be chosen by different methods (e.g., Morozov's discrepancy principle). The optimal and practical choices of both $k$ and the regularization $\alpha$ are left for further studies.

Our theoretical results show that, with the proper weighting, a source given by a single basis function can be identified up to scaling. This is a sanity check; a similar result for a more complex source is desired. Our computational results show that such a result is indeed plausible.

A final remark concerns the data discrepancy/fidelity term. We set out with a standard least squares term, but some of the recovery results are established for a problem which involves an approximation $\Kk^\dagger$ of the pseudo inverse of the forward operator. This can, unless the truncation parameter $k$ is chosen in accordance with the noise level, lead to severe error amplification due to the presence of very small singular values. 
However, our numerical results indicate that the use of $\Kk^\dagger$ in the fidelity term yields better reconstructions than standard methods. Again we leave the theoretical justification for future work.



\section*{Acknowledgments}
Kim Knudsen was supported by the Villum Foundation (grant no. 25893). \rem{We would also like to thank the anonymous referees for their valuable suggestions and comments.}

\appendix

\section{\rem{Proof of Lemma \ref{prop:maximum}}} \label{sec:proof_maximum}
    \rem{From the proof of Proposition \ref{prop:bp}, we have that 
    $$S = \{x \in X: \Kk x = \Kk\phi_j\},$$ cf. the reasoning leading to \eqref{A11}.  
    This implies that
    $$\bar{x}_k = \Kk^\dagger \Kk \phi_j = \Pk\phi_j,$$ see \eqref{eq:Pk}. 
    Hence, by expanding $\bar{x}_k$ in the $\{\phi_i\}$-basis, and invoking the definition \eqref{eq:W} of $\Wk$, we get
    \begin{align}
        \nonumber
        \Wk^{-1}\bar{x}_k &= \Wk^{-1} \sum_i (\bar{x}_k,\phi_i) \phi_i \\
        \nonumber
        &= \sum_i \wki^{-1}(\bar{x}_k,\phi_i) \phi_i \\
        \nonumber
        &= \sum_i \wki^{-1}(P_k\phi_j,\phi_i) \phi_i \\
        \nonumber
        &= \sum_i \wki^{-1}(P_k\phi_j,P_k\phi_i) \phi_i \\
        \label{eq:maximumIndexExpansion}
        &= \wkj \sum_i \left(\frac{\Pk\phi_j}{\wkj}, \frac{\Pk\phi_i}{\wki}\right) \phi_i, 
    \end{align}
    where the second last equality follows from the fact that $\Pk$ is an orthonormal projection. That is, 
    \begin{equation} \label{eq:maximumIndex}
        [\Wk^{-1}\bar{x}_k]_i = \wkj \left(\frac{\Pk\phi_j}{\wkj}, \frac{\Pk\phi_i}{\wki}\right). 
    \end{equation}}
    
    \rem{Recall the definition \eqref{A6.01} of the weights $\{ \wki \}$. The assumption that $\|\Pk\phi_j\| \geq \tau$ implies that $\wkj = \|\Pk\phi_j\|$ and, since $\wki \geq \|\Pk\phi_i\|$, it follows from \eqref{eq:maximumIndex} that $j$ is in the $\argmax$ set \eqref{eq:minnorm}. Moreover, the assumption \eqref{eq:nonpar} yields that  
    \begin{equation*}
    \Pk \phi_l \neq c \Pk \phi_q, \quad l \neq q, c \in \mathbb{R}, 
    \end{equation*}
    and hence the $\argmax$-set \eqref{eq:minnorm} can only contain $j$ because applying the Cauchy-Schwarz inequality to the inner product on right-hand-side of \eqref{eq:maximumIndex} yields a strict inequality.} 

\bibliographystyle{plain}
\bibliography{references}

\begin{thebibliography}{10}

\bibitem{Amal}
Amal Mohammed~A Alghamdi, Martin~Sæbye Carøe, Jasper~Marijn Everink,
  Jakob~Sauer Jørgensen, Kim Knudsen, Jakob Tore~Kammeyer Nielsen,
  Aksel~Kaastrup Rasmussen, Rasmus Kleist~Hørlyck Sørensen, and Chao Zhang.
\newblock Spatial regularization and level-set methods for experimental
  electrical impedance tomography with partial data.
\newblock {\em Applied Mathematics for Modern Challenges}, 2(2):165--186, 2024.

\bibitem{bredies08}
Kristian Bredies and Dirk~A. Lorenz.
\newblock Iterated hard shrinkage for minimization problems with sparsity
  constraints.
\newblock {\em SIAM Journal on Scientific Computing}, 30(2):657--683, 2008.

\bibitem{burger2013inverse}
Martin Burger, Hendrik Dirks, and Jahn M{\"u}ller.
\newblock Inverse problems in imaging.
\newblock {\em Large Scale Inverse Problems. Computational Methods and
  Applications in the Earth Sciences (M. Cullen, M.~A. Freitag, S. Kindermann,
  and R. Scheichl, eds.), Radon Series on Computational and Applied
  Mathematics}, 13:135--180, 2013.

\bibitem{Burger_2013}
Martin Burger, Jens Flemming, and Bernd Hofmann.
\newblock Convergence rates in $\ell^1$-regularization if the sparsity
  assumption fails.
\newblock {\em Inverse Problems}, 29(2):025013, jan 2013.

\bibitem{Burger_2004}
Martin Burger and Stanley Osher.
\newblock Convergence rates of convex variational regularization.
\newblock {\em Inverse Problems}, 20(5):1411, 2004.

\bibitem{Calderon1980}
Alberto-P. Calder\'on.
\newblock On an inverse boundary value problem.
\newblock In {\em Seminar on {N}umerical {A}nalysis and its {A}pplications to
  {C}ontinuum {P}hysics ({R}io de {J}aneiro, 1980)}, pages 65--73. Soc. Brasil.
  Mat., Rio de Janeiro, 1980.

\bibitem{Daubechies2004}
Ingrid Daubechies, Michel Defrise, and Christine De~Mol.
\newblock An iterative thresholding algorithm for linear inverse problems with
  a sparsity constraint.
\newblock {\em Comm. Pure Appl. Math.}, 57(11):1413--1457, 2004.

\bibitem{donatelli2004anti}
Marco Donatelli and Stefano Serra-Capizzano.
\newblock Anti-reflective boundary conditions and re-blurring.
\newblock {\em Inverse Problems}, 21(1):169--182, 2004.

\bibitem{elvetun21b}
Ole~L{\o}seth Elvetun and Bj{\o}rn~Fredrik Nielsen.
\newblock Modified {T}ikhonov regularization for identifying several sources.
\newblock {\em International Journal of Numerical Analysis and Modeling},
  18(6):740--757, 2021.

\bibitem{elvetun2023weighted}
Ole~L{\o}seth Elvetun and Bj{\o}rn~Fredrik Nielsen.
\newblock Weighted sparsity regularization for source identification for
  elliptic {PDE}s.
\newblock {\em Journal of Inverse and Ill-posed Problems}, 31(5):687--709,
  2023.

\bibitem{elvetun2024identifying}
Ole~L{\o}seth Elvetun and Bj{\o}rn~Fredrik Nielsen.
\newblock Identifying the source term in the potential equation with weighted
  sparsity regularization.
\newblock {\em Mathematics of Computation}, 93(350):2811--2836, 2024.

\bibitem{elvetun2021a}
Ole~L{\o}seth Elvetun and Bjørn~Fredrik Nielsen.
\newblock A regularization operator for source identification for elliptic
  {PDE}s.
\newblock {\em Inverse Problems and Imaging}, 15(4):599--618, 2021.

\bibitem{evans2022partial}
Lawrence~C Evans.
\newblock {\em Partial differential equations}, volume~19.
\newblock American Mathematical Society, 2022.

\bibitem{fuchs99}
Manfred Fuchs, Michael Wagner, K{\"o}hler Thomas, and Hans-Aloys Wischmann.
\newblock Linear and nonlinear current density reconstructions.
\newblock {\em Journal of Clinical Neurophysiology}, 16(3):267--95, 1999.

\bibitem{garde2015a}
Henrik Garde and Kim Knudsen.
\newblock 3d reconstruction for partial data electrical impedance tomography
  using a sparsity prior.
\newblock {\em Proceedings of the 10th Aims Conference on Dynamical Systems,
  Differential Equations and Applications (2014)}, pages 495--504, 2015.

\bibitem{garde2016a}
Henrik Garde and Kim Knudsen.
\newblock Sparsity prior for electrical impedance tomography with partial data.
\newblock {\em Inverse Problems in Science and Engineering}, 24(3):524--541,
  2016.

\bibitem{grasmair10}
Markus Grasmair, Otmar Scherzer, and Markus Haltmeier.
\newblock Necessary and sufficient conditions for linear convergence of
  $\ell^1$-regularization.
\newblock {\em Communications on Pure and Applied Mathematics}, 64(2):161--182,
  2011.

\bibitem{hansen1998rank}
Per~Christian Hansen.
\newblock {\em Rank-deficient and discrete ill-posed problems: numerical
  aspects of linear inversion}.
\newblock SIAM, 1998.

\bibitem{hansen2010discrete}
Per~Christian Hansen.
\newblock {\em Discrete inverse problems: insight and algorithms}.
\newblock SIAM, 2010.

\bibitem{jin2017a}
Bangti Jin, Peter Maaß, and Otmar Scherzer.
\newblock Sparsity regularization in inverse problems.
\newblock {\em Inverse Problems}, 33(6):060301, 2017.

\bibitem{krantz2002primer}
Steven~G Krantz and Harold~R Parks.
\newblock {\em A primer of real analytic functions}.
\newblock Springer Science \& Business Media, 2002.

\bibitem{lorenz08}
D.~A. Lorenz.
\newblock Convergence rates and source conditions for {T}ikhonov regularization
  with sparsity constraints.
\newblock {\em Journal of Inverse and Ill-posed Problems}, 16(5):463--478,
  2008.

\bibitem{2009_Lu}
Shuai Lu and Sergei~V Pereverzev.
\newblock Sparse recovery by the standard tikhonov method.
\newblock {\em Numerische Mathematik}, 112(3):403--424, jan 2009.

\bibitem{ng1999fast}
Michael~K Ng, Raymond~H Chan, and Wun-Cheung Tang.
\newblock A fast algorithm for deblurring models with {N}eumann boundary
  conditions.
\newblock {\em SIAM Journal on Scientific Computing}, 21(3):851--866, 1999.

\bibitem{Mikko}
Mikko Räsänen, Petri Kuusela, Jyrki Jauhiainen, Muhammad Arif, Kenneth
  Scheel, Tuomo Savolainen, and Aku Seppänen.
\newblock Kuopio tomography challenge 2023 – electrical impedance tomography
  competition and open dataset.
\newblock {\em Applied Mathematics for Modern Challenges}, 2(2):93--118, 2024.

\bibitem{somersalo1992}
Erkki Somersalo, Margaret Cheney, and David Isaacson.
\newblock Existence and uniqueness for electrode models for electric current
  computed tomography.
\newblock {\em SIAM Journal on Applied Mathematics}, 52(4):1023--1040, 1992.

\end{thebibliography}

\end{document}